%!TEX TS-program = pdflatex
%!TEX encoding = UTF-8 Unicode

\documentclass{article}
\bibliographystyle{plain}
\usepackage{bbm}
\usepackage{cancel}

\usepackage{comment}

\usepackage{geometry} % Required for adjusting page dimensions and margins

\geometry{
	paper=a4paper, % Paper size, change to letterpaper for US letter size
	top=2.5cm, % Top margin
	bottom=3cm, % Bottom margin
	left=2.5cm, % Left margin
	right=2.5cm, % Right margin
	headheight=14pt, % Header height
	footskip=1.5cm, % Space from the bottom margin to the baseline of the footer
	headsep=1.2cm, % Space from the top margin to the baseline of the header
	%showframe, % Uncomment to show how the type block is set on the page
}

\usepackage{amsfonts, amsmath, amsthm, amssymb, latexsym, graphicx, mathrsfs, tikz, cancel, cleveref, authblk, soul}

%\allowdisplaybreak

\newcommand{\intd}{\, \textnormal{d}}
\newcommand{\ud}{\textnormal{d}}

\newcommand{\mbb}[1]{\mathbb{#1}}

\newcommand{\inprod}[2]{\langle{#1},{#2}\rangle}

\newcommand{\cond}[1]{\, #1 \vert \,}

\newcommand{\Pbb}{\mathbb P}
\newcommand{\R}{\mathbb R}
\newcommand{\N}{\mathbb N}

\theoremstyle{plain}
\newtheorem{theorem}{Theorem}[section]

\newtheorem{lemma}[theorem]{Lemma}

\theoremstyle{definition}

\newtheorem{remark}[theorem]{Remark}

\theoremstyle{remark}

\numberwithin{equation}{section}

\title{Collision times of multivariate Bessel processes with their Weyl chambers' boundaries and their Hausdorff dimension}
\author[1]{Nicole Hufnagel\thanks{nicole.hufnagel@hhu.de}}
\author[2]{Sergio Andraus\thanks{sergio@tsukuba-g.ac.jp}}

\affil[1]{Heinrich Heine University Düsseldorf}
\affil[2]{Tsukuba Gakuin University}

\date{\today}

\begin{document}
	\maketitle
	\allowdisplaybreaks
	\abstract{
		Multivariate Bessel processes, otherwise known as radial Dunkl processes, are stochastic processes defined in a Weyl chamber that are repelled from the latter's boundary by a singular drift with a strength given by the multiplicity function $k$. It is a well-known fact that when $k$ is sufficiently small, these processes hit the Weyl chamber's boundary almost surely, and it was recently shown by the authors that the collision times for the process of type $A$, also known as the Dyson model (a one-dimensional multiple-particle stochastic system), have a Hausdorff dimension that depends on $k$. In this paper, we use the square of the alternating polynomial, which corresponds to the reflection group of the process, to extend this result to all multivariate Bessel processes of rational type, and we show that the Hausdorff dimension of collision times is a piecewise-linear function of the minimum of $k$, but is independent of the dimension of the space where the process lives. This implies that the Hausdorff dimension is independent of the particle number for processes with a particle system representation.
	}
	
	\section{Introduction and main result}\label{sc:intro}
	
	Multivariate Bessel processes, or radial Dunkl processes \cite{GallardoYor05,RoslerVoit98} are continuous-time Markov processes in multiple dimensions that are an interesting object of research due to their relationship with spatial symmetries, random matrices, and quantum integrable systems \cite{DunklProcesses}. They are defined as the continuous, reflection-invariant part of Dunkl processes \cite{RoslerVoit98}, which themselves are defined as an extension of $N$-dimensional Brownian motion obtained by using Dunkl operators \cite{Dunkl89} instead of partial derivatives for the semigroup generator of the process. Perhaps the most important feature of Dunkl operators is that they are differential-difference operators with difference terms that depend on the action of a finite reflection group generated by the choice of a set of vectors known as a root system. This means that for every root system there exists a corresponding multivariate Bessel process, and many well-known processes can be obtained by choosing the appropriate root system.
	
	For instance, by choosing $N=1$ and the rank-one root system, one receives the classical Bessel process of index $\nu$ described by the stochastic differential equation (SDE)
	\[\ud Z_\nu(t)=\ud B(t)+\frac{2\nu+1}{2}\frac{\ud t}{Z_\nu(t)}.\]
	Similar SDEs are obtained in higher dimensions for all other root systems, which motivates the nomenclature \emph{multivariate} Bessel processes. Note that the drift term in this SDE is singular, and that its magnitude changes depending on the value of the index $\nu$. In general, the magnitude of the singular drift is given by the multiplicity function $k$, which for particular values and root systems provides an alternative formulation of matrix eigenvalue processes. When the root system is of type $A$ or $B$ in $N$-dimensional space, the processes obtained are the Dyson model \cite{Dyson62A} and the square root of Wishart-Laguerre processes \cite{Bru91,KonigOConnell01} respectively, and if $k=1/2,$ 1, or 2 they correspond to eigenvalue processes of orthogonal, Hermitian, or symplectic random matrices respectively, with independent Brownian motions as entries up to symmetry \cite[Chapter 3]{DunklProcesses}. 
	
	An important feature of multivariate Bessel processes is that they are defined in a subset of $\mbb{R}^N$ called the Weyl chamber $W$, which is a cone with a boundary $\partial W$ given by a series of hyperplanes, and that their drift terms explode if processes hit $\partial W$. In spite of this singular drift, it is known that the SDEs of multivariate Bessel processes have unique strong solutions for any positive value of $k$ \cite{CepaLepingle, Chybiryakov, GraczykMalecki}. Moreover, it is known that when $k$ is equal to or larger than $1/2$ these processes never hit the boundary, a fact which applies to all matrix-eigenvalue process cases. However, when a multiplicity lies strictly between $0$ and $1/2$ the time it takes for the processes to hit a boundary for the first time is almost surely finite \cite{Chybiryakov, Demni09}. Regarding this last point, we showed in our previous paper \cite{HufnagelAndraus21} that in the $A$ case, namely the Dyson model, the set of collision times has a Hausdorff dimension equal to
	\[\max\Big\{0,\frac12-k\Big\}.\]
	This result came from a direct derivation where we made use of the machinery in \cite{LiuXiao98} as well as an asymptotic formula by Graczyk and Sawyer \cite[Theorem 7]{GraczykSawyer} to derive the upper bound of the Hausdorff dimension. However, there are no known generalizations of the Graczyk-Sawyer formula outside of the $A$ case, and our best attempt at extending this result to the $B$ case was the derivation of the lower bound of the Hausdorff dimension by one of the authors in \cite[Appendix A]{Hufnagel}.
	
	Fortunately, during private communications with J. Małecki, we found out that we can avoid using the Graczyk-Sawyer formula in the $A$ case by using an indirect approach: it turns out that the SDE of the squared Vandermonde determinant of the process is a time-changed version of a classical squared Bessel process, so one can obtain the same result by using the known result on the Hausdorff dimension of hitting times to the origin for the classical Bessel process. This approach is readily generalized to all reduced root systems by observing that the Vandermonde determinant in the $A$ case is the alternating polynomial of the symmetric group, the reflection group generated by the root system of type $A$, so one can immediately guess that for other root systems it is enough to study their corresponding alternating polynomial. In this paper, we consider the multivariate Bessel process of rational type $X(t)$ associated to an arbitrary reduced root system $R$ with a positive multiplicity function $k:R\to(0,\infty)$, and we investigate the Hausdorff dimension of the times when it collides with the boundary of the Weyl chamber. Our main result is the following statement.
	\begin{theorem}\label{th:main}
		The Hausdorff dimension of collision times of a multivariate Bessel process with its Weyl chamber's boundary is given by
		\[\dim\big(X^{-1}(\partial W)\big)=\max\Big\{0,\frac12-\min_{\alpha\in R}k(\alpha)\Big\}\]
		almost surely.
	\end{theorem}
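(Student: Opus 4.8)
The plan is to reduce this geometric problem about $\partial W$ to a one-dimensional problem governed by the classical Bessel process. The starting observation is that the collision set coincides with the zero set of the alternating polynomial $p(x)=\prod_{\alpha\in R^+}\langle\alpha,x\rangle$ attached to the reflection group: since $\langle\alpha,X(t)\rangle>0$ for every positive root $\alpha$ while $X(t)$ lies in the open chamber $W$, the process touches $\partial W$ exactly when $p(X(t))=0$, and
\[
X^{-1}(\partial W)=\bigcup_{\alpha\in R^+}\{t\ge 0:\langle\alpha,X(t)\rangle=0\}.
\]
As this is a finite union, finite stability of Hausdorff dimension reduces the theorem to computing $\dim\{t:\langle\alpha,X(t)\rangle=0\}$ for each single positive root $\alpha$ and taking the maximum.

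First I would study the one-dimensional projection $\xi_\alpha(t)=\langle\alpha,X(t)\rangle$. Applying It\^o's formula to the defining SDE, the martingale part of $\xi_\alpha$ has constant quadratic variation $|\alpha|^2\intd t$, while the drift splits into the singular term $k(\alpha)|\alpha|^2/\xi_\alpha$ produced by $\alpha$ itself and a remainder $\sum_{\beta\in R^+\setminus\{\alpha\}}k(\beta)\langle\alpha,\beta\rangle/\langle\beta,X(t)\rangle$. Since the ratio of the singular drift coefficient to the squared diffusion coefficient equals $k(\alpha)$, the leading-order structure is exactly that of a Bessel process of dimension $\delta_\alpha=2k(\alpha)+1$ and index $\nu_\alpha=k(\alpha)-1/2$, which recovers the known dichotomy that $H_\alpha$ is hit if and only if $k(\alpha)<1/2$. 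The remainder drift is smooth and bounded as long as $X$ avoids the other hyperplanes $H_\beta$, so on any stopping-time window confining $X$ to a neighbourhood of a regular point of $H_\alpha$ a Girsanov change of measure removes the bounded remainder and makes the law of $\xi_\alpha$ equivalent to that of a genuine Bessel process of dimension $\delta_\alpha$. Equivalently one may work with the squared alternating polynomial and note that near a regular point of $H_\alpha$ one has $p(X)^2=\langle\alpha,X\rangle^2\prod_{\beta\neq\alpha}\langle\beta,X\rangle^2$ with the second factor smooth and bounded away from zero, so that locally the zero set of $p(X)^2$ is that of a time-changed squared Bessel process of dimension $\delta_\alpha$.

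Then I would invoke the classical fact that the zero set of a Bessel process of dimension $\delta\in(0,2)$ is almost surely the range of a stable subordinator of index $1-\delta/2$ and hence has Hausdorff dimension $1-\delta/2=1/2-k(\alpha)$; both a strictly increasing absolutely continuous time change and a locally equivalent change of measure preserve this dimension. On the one hand, running this comparison near a regular point of a wall of minimal multiplicity yields the lower bound $\dim\{t:\langle\alpha,X(t)\rangle=0\}\ge 1/2-k(\alpha)$ by exhibiting the corresponding near-wall excursions; on the other hand it provides the matching upper bound on the regular part of each wall. Taking the maximum over $\alpha\in R^+$ turns $\max_\alpha\big(1/2-k(\alpha)\big)$ into $1/2-\min_{\alpha\in R}k(\alpha)$, and when every $k(\alpha)\ge 1/2$ the collision set is empty and the dimension is $0$; together these give $\max\{0,1/2-\min_{\alpha\in R}k(\alpha)\}$.

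The delicate step, which I expect to be the main obstacle, is controlling the non-regular boundary, namely the times at which $X$ meets an intersection $H_\alpha\cap H_\beta$ of two or more walls, where the remainder drift in $\xi_\alpha$ ceases to be bounded and the Girsanov comparison breaks down. I would argue that these strata have codimension at least two, so hitting them is harder than hitting a single wall and the associated set of times has Hausdorff dimension no larger than $1/2-\min_{\alpha\in R}k(\alpha)$; making this rigorous will require either an induction on the rank of the root subsystem generated by the coincident walls, or a direct estimate showing that the time spent in shrinking neighbourhoods of these intersections is negligible for the dimension count. A secondary point is to check that the random clock of the time change neither explodes nor terminates before $X$ returns to the wall, which follows from the non-explosion and strong existence of the multivariate Bessel SDE quoted above.
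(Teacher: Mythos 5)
You have correctly identified the paper's central idea (reduce collisions with $\partial W$ to zero sets of classical Bessel processes via the alternating polynomial and root projections), but your execution leaves two genuine gaps, and you have flagged the first one yourself without closing it. Your root-by-root decomposition plus localized Girsanov comparison only controls zero times at which $X$ sits on exactly one wall: near an intersection $H_\alpha\cap H_\beta$ the remainder drift $\sum_{\gamma\neq\alpha}k(\gamma)\langle\alpha,\gamma\rangle/\langle\gamma,X(t)\rangle$ is unbounded, the change of measure breaks down, and your proposed fixes (``induction on the rank'' or ``a direct estimate'') are left entirely open. The codimension-two heuristic is not a proof — whether the process hits lower-dimensional facets for small multiplicities requires its own argument — so as written the upper bound is incomplete. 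The paper bypasses the regular/singular stratification altogether by working globally with $V(x)=\prod_{\alpha\in R_+}\langle\alpha,x\rangle^2$: the key point, which your sketch misses, is the cancellation identity of \cite[Lemma 6.4.6]{DunklXu}, which kills all cross terms $\sum_{\alpha\neq\beta}k(\beta)\langle\alpha,\beta\rangle/\bigl(\langle\alpha,x\rangle\langle\beta,x\rangle\bigr)$ in the It\^o drift of $V(X(t))$. Thanks to this identity, for uniform $k$ the process $V(X(t))$ is \emph{exactly} a time-changed squared Bessel process of index $k-1/2$ (Lemma~\ref{lm:time_change}), with the clock shown to be a.s.\ bijective and bi-Lipschitz on compacts, and for non-uniform $k$ the bound $k(\alpha)\geq\min_{\gamma\in R_+}k(\gamma)$ plus a one-dimensional comparison theorem yields the upper bound with no local analysis near walls at all.

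The second gap is in your lower bound. Exhibiting ``near-wall excursions'' by a change of measure near a regular point of a minimal wall gives, at best, the dimension statement with positive probability on a stopping window, and conditionally on the process actually visiting that wall; you still need (i) that the specific wall of minimal multiplicity is hit — \cite[Proposition 1]{Demni09} gives a.s.\ hitting of $\partial W$, not of a prescribed facet — and (ii) an upgrade from positive probability to almost surely. The paper's Lemma~\ref{lm:non_uniform_lower_bound} settles both at once by a global pathwise argument: pairing each positive root $\alpha$ with its reflection $\sigma_\beta\alpha$ shows the drift of $\langle X(t),\beta\rangle/\|\beta\|$ beyond the Bessel term is pointwise \emph{negative}, so this projection is dominated pathwise by a Bessel process $Z_\beta$ of index $k(\beta)-1/2$; then $Z_\beta^{-1}(0)\subseteq X^{-1}(\partial W^\beta)$, monotonicity of the Hausdorff dimension gives the a.s.\ bound $1/2-k(\beta)$ for every simple root, and the covering $\partial W=\bigcup_{\beta\in\Delta}\partial W^\beta$ plus countable stability finishes. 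In short: your strategy is the right one in spirit, but the two steps you defer — the singular strata in the upper bound and the a.s.\ hitting/upgrade in the lower bound — are exactly the difficulties the paper's global comparison arguments are built to avoid, so your proposal does not yet constitute a proof.
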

	
	As mentioned in our previous paper \cite{HufnagelAndraus21}, the significance of this result is that it characterizes the collision times between particles in several one-dimensional, multiple-particle stochastic processes and gives a clear delimitation for the behavior of these processes. That is, it indicates whether they show collisions or not, and if they do one can interpret the Hausdorff dimension as a quantity that expresses how frequent these collisions are. It must be noted that our result is independent of the dimensionality of the root system considered. This dimensionality corresponds to the number of particles in the Dyson model and the Wishart-Laguerre processes, so we expect for this result to hold in the thermodynamic limit, namely, when the number of particles tends to infinity. In addition, this result is applicable to other processes which have no particle-system interpretation, such as those that correspond to the dihedral root systems in two dimensions, as well as the exceptional root systems.
	
	In order to prove our statement, we first consider the uniform multiplicity case, that is, the case where the multiplicity function is constant, and we study the SDE of the squared alternating polynomial evaluated at $X(t)$. We observe that the alternating polynomial cancels if and only if $X(t)$ hits the boundary, and we find that a random bi-Lipschitz time change on compact intervals transforms the resulting SDE into that of a classical squared Bessel process. Consequently, the Hausdorff dimension of hitting times at $\partial W$ is the same as that of a classical squared Bessel process of index $k-1/2$ hitting the origin almost surely. 
	
	For the non-uniform case, we derive coinciding upper and lower bounds for the Hausdorff dimension as follows. For the upper bound, we recall calculations made in the uniform case and replace the multiplicity function by its minimum. We find an inequality which shows that the squared alternating polynomial in the non-uniform case is bounded below by a process given by the SDE of the uniform case with the minimum multiplicity. As the latter is a time-changed classical squared Bessel process of index $\min_{\alpha\in R}k(\alpha)-1/2$, we use the monotonicity property of the Hausdorff dimension to conclude that the upper bound is given by the rhs of Theorem~\ref{th:main}. For the lower bound, we extend the idea found in the proof of \cite[Proposition 1]{Demni09}. Namely, we consider the SDE of the inner product of $X(t)$ with a simple root $\beta$ of $R$ (see Section~\ref{sc:setting}), and show that this inner product is bounded above by a classical Bessel process of index $k(\beta)-1/2$. Finally, using the countable stability and monotonicity properties of the Hausdorff dimension, we find that the Hausdorff dimension is also bounded below by the rhs of Theorem~\ref{th:main}.
	
	The paper is structured as follows. In Section~\ref{sc:setting}, we summarize the mathematical objects and notions we use in the rest of the paper. In Section~\ref{sc:uniform}, we derive our main result for the uniform multiplicity case. Note that this result applies directly to cases where the multiplicity function only takes one value, such as the $A$ case. Finally, we complete the proof of our main result in Section~\ref{sc:non-uniform} by considering the non-uniform case, which corresponds to root systems for which the multiplicity function may take more than one value, such as the $B$ case.
	
	\section{Setting, definitions, and properties}\label{sc:setting}
	\subsection{Multivariate Bessel process}	
	Before defining the main object of study in this paper, we summarize several related objects which will be referred to in later sections. For two vectors $x,y\in\R^N$ we denote the canonical inner product by $\langle x,y\rangle$, and for $x\neq0$ we describe the reflection operator $\sigma_x$ by the formula
	\[\sigma_xy:=y-2\frac{\langle x,y\rangle}{\langle x,x\rangle}x,\]
	which represents the vector obtained by reflecting the vector $y$ across the hyperplane specified by the vector $x$. Then, a \emph{root system} $R$ is a finite set of vectors, called \emph{roots}, such that if $\alpha,\beta\in R$, then $\sigma_\alpha\beta\in R$, that is, the reflection of a root by any root is also a root. Additionally, we will require the root system to be \emph{reduced}: if for $c\in\R$ both $\alpha$ and $c\alpha$ are roots, then $c=\pm1$.
	
	Some details about the structure of reduced root systems will be important in the upcoming sections. One can determine a \emph{positive subsystem} $R_+$ by choosing an arbitrary vector $u$ such that $\langle\alpha,u\rangle\neq0$ for any root $\alpha$:
	\[R_+:=\{\alpha\in R\cond{}\langle\alpha,u\rangle>0\}.\]
	For simplicity, we will keep the dependence on $R$ of the following objects implicit. %, adding $R$ as a subscript or superscript only when confusion may arise.\footnote{\nicole{If we will never use a sub- or superscript we should delete the second part of this sentence.} \sergio{Ok, I 'll try to remember.} \nicole{As we have finished the proofs we can erase it now}} 
	Now that the positive subsystem is fixed, we can define the \emph{Weyl chamber} $W$:
	\[W:=\{x\in\mathbb{R}^N\cond{}\langle\alpha,x\rangle\geq0{}\ \forall\alpha\in R_+\}.\]
	We denote the \emph{simple system} of $R_+$ by $\Delta$; this is a particular set of roots, called \emph{simple roots}, which forms a vector basis for $R_+$ in such a way that every root $\alpha\in R_+$ can be written in the form
	\begin{align}
		\label{eq:simple_roots}\alpha=\sum_{\gamma\in\Delta}c_\gamma\gamma
	\end{align}
	with all coefficients $c_\gamma\geq0$ \cite[Theorem, p. 8]{Humphreys}. It is known that if $\gamma,\xi\in\Delta$ with $\gamma\neq\xi$, then $\langle\gamma,\xi\rangle\leq0$ \cite[Corollary, p. 9]{Humphreys}. All roots not in $R_+$ are in the \emph{negative subsystem} $-R_+$ and hence are a linear combination of the roots in $\Delta$ with all coefficients non-positive. Finally, note that the reflections obtained from the roots in $R$ generate a reflection group denoted by $G$.  
	
	Associated to $R$ is a \emph{multiplicity function} $k:R\to\mathbb{C}$, which is a set of parameters assigned arbitrarily to each disjoint $G$-orbit of the roots in $R$. In this sense, we say that the multiplicity function $k(\alpha)$ is invariant under the action of $G$, that is, $k(\rho\alpha)=k(\alpha)$ for all $\alpha\in R$ and $\rho\in G$. Within our setting, we will require that the multiplicity function be real and positive, and we will refer to its values as \emph{multiplicities}. Additionally, we define the \emph{weight function}
	\begin{align}
		\label{eq:weight_function}
		w_k(x):=\prod_{\alpha\in R_+}\langle\alpha,x\rangle^{2k(\alpha)}
	\end{align}
	and the Selberg integral
	\[c_k:=\int\limits_{W}e^{-x^2/2}w_k(x)\intd x.\]
	Our study is focused on the multivariate Bessel process associated to the root system $R$, which we denote by $\{ X(t)\}_{t\geq0}$. This process is given by the semigroup generator, see \cite[p. 114, Eq. (7) and p. 199, Eq. (1)]{DunklProcesses}:
	\[\mathcal{L}_{k}:=\sum_{i=1}^N\frac{\partial^2}{\partial x_i^2}+2\sum_{\alpha\in R_+}\frac{k(\alpha)}{\langle\alpha,x\rangle}\sum_{i=1}^N\alpha_i\frac{\partial}{\partial x_i}.\]
	We denote the transition probability of $\{X(t)\}_{t\geq 0}$ ending at $y\in W$ after a time $t>0$ from the starting point $x$ by $p_k(t,y\cond{}x)$. This function satisfies the relation
	\[\frac{\partial}{\partial t}p_k(t,y\cond{}x)=\frac{1}{2}\mathcal{L}_kp_k(t,y\cond{}x).\]
	For clarity, we emphasize that $\mathcal{L}_k$ acts on the variable $x$. Indicating the $2$-norm by $\|x\|=\sqrt{\inprod{x}{x}}$, we can write $p_k(t,y\cond{}x)$ explicitly as follows, see \cite{Rosler98}, \cite[p.199, Eq. (2)]{DunklProcesses} and \cite[p. 160]{GallardoYor05}:
	\begin{align}
		\label{eq:density}
		p_k(t,y\cond{}x)= \frac{e^{-\frac{\|x\|^2+\|y\|^2}{2t}}}{c_kt^{\frac{N}{2}}}D_k\left(\frac{x}{\sqrt{t}},\frac{y}{\sqrt{t}}\right)w_k
		\left(\frac{y}{\sqrt{t}}\right).
	\end{align}
	We do not define the Dunkl-Bessel function $D_k(x,y)$ explicitly in this paper, but we point out the fact that it is a deformation of the exponential function as seen in \cite[6.6 The $\kappa$-Analogue of the Exponential, Definition 6.6.4]{DunklXu} in the sense that, when all multiplicities are zero, it satisfies the relation
	\[D_k(x,y)\big|_{k\equiv 0}=\frac{1}{|G|}\sum_{\rho\in G}\exp \langle \rho x,y\rangle.\]
	Explicit forms of this function for several root systems can be found in \cite{BakerForrester97} and in \cite[Chapter 3]{DunklProcesses}. In addition, it obeys the useful inequalities \cite[Proposition 5.1, Corollaries 5.2 and 5.3]{Rosler99}:
	\begin{align*}
		%\label{eq:inequality_Bessel_function}
		e^{-\|x\|\|y\|}\leq D_k(x,y)\leq e^{\|x\|\|y\|}.
	\end{align*}
	Given a standard Brownian motion $\{B(t)\}_{t\geq0}$ in $\R^N$, the stochastic differential equation (SDE) of $X(t)$ in vector form is
	\begin{align}
		\ud X(t)&=\ud B(t)+\sum_{\alpha\in R_+}k(\alpha)\frac{\alpha}{\langle\alpha, X(t)\rangle}\intd t.\label{eq:MultiBesselSDE}
	\end{align}
	%The clear similarities between this SDE and that of a classical Bessel process are the reason for which we favor the nomenclature \emph{multivariate Bessel process}. 
	We emphasize that this singular SDE has a unique strong solution if $k(\alpha)>0$ for every $\alpha\in R_+$, see \cite[Proposition 3.1]{Chybiryakov} and \cite[Theorem 1]{Demni09}. In the case $0<\min_{\alpha\in R_+}k(\alpha)<1/2$ this process almost surely hits the boundary $\partial W$ \cite[Proposition 1]{Demni09}. These are continuous Markov processes by \cite[Theorem 4.10]{RoslerVoit98}.
	%Feller processes, and as such they satisfy the strong Markov property; this fact follows from
	
	\subsection{Hausdorff dimension - Definition and properties}
	The Hausdorff dimension generalizes the familiar notion of dimension. This means that well-known geometric objects like straight lines, hyperplanes and others with intuitive dimensionality keep the same dimension. The Hausdorff dimension offers a finer distinction, since it admits positive real numbers. 
	
	We denote by $B(x,R):=\{y\in \mbb{R}^N:\|y\|\leq R\}$ the closed $N$-dimensional ball centered at $x$ with radius $R$. For the monotonically increasing monomial (on the positive halfline) of power $\alpha\geq 0 $ the Hausdorff measure is specified by 
	\begin{align*}
		m_\alpha(E):=\lim\limits_{\varepsilon\to 0} \inf\Big\{\sum_{i=1}^\infty (2r_i)^\alpha: E\subset \bigcup\limits_{i=1}^\infty B(x_i,r_i); r_i<\varepsilon\Big\}.
	\end{align*}
	The radius $r_i$ may be equal to zero, hence the covering of $E$ by balls can be a finite union. The Hausdorff dimension is defined by the following lemma. 
	\begin{lemma}[{\cite[8.1 Hausdorff dimension]{Adler1981}}]
		\label{lm:Hausdorff_dimension}
		For any set $E\subset \mathbb{R}^n$ there exists a unique number $\alpha^\star$, called the Hausdorff dimension of $E$, for which 
		\begin{align*}
			\alpha <\alpha^\star \Rightarrow m_\alpha(E)=\infty, \quad \alpha >\alpha^\star \Rightarrow m_\alpha(E)=0. 
		\end{align*}
		This number is denoted by $\dim (E)$ and satisfies
		\begin{align*}
			\alpha^\star=\dim(E)=\sup\{\alpha>0:m_\alpha(E)=\infty \}=\inf \{\alpha>0:m_\alpha(E)=0\}.
		\end{align*}
	\end{lemma}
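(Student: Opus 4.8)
The plan is to reduce the whole statement to a single scaling estimate which shows that the map $\alpha\mapsto m_\alpha(E)$ can pass from the value $+\infty$ to the value $0$ at exactly one threshold; existence, uniqueness, and the two sup/inf identities then follow as order-theoretic bookkeeping. Throughout I would write $m_\alpha^\varepsilon(E)$ for the infimum appearing inside the limit that defines $m_\alpha(E)$, so that $m_\alpha(E)=\lim_{\varepsilon\to 0}m_\alpha^\varepsilon(E)$. Since shrinking $\varepsilon$ only discards admissible covers, $m_\alpha^\varepsilon(E)$ is non-decreasing as $\varepsilon\downarrow 0$; hence the limit exists in $[0,\infty]$ and $m_\alpha^\varepsilon(E)\le m_\alpha(E)$ for every $\varepsilon>0$, facts I will use repeatedly.

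First I would establish the key lemma: if $0<\alpha<\beta$ and $m_\alpha(E)<\infty$, then $m_\beta(E)=0$. For any admissible cover $\{B(x_i,r_i)\}$ with $r_i<\varepsilon$ one factors $(2r_i)^\beta=(2r_i)^{\beta-\alpha}(2r_i)^\alpha\le(2\varepsilon)^{\beta-\alpha}(2r_i)^\alpha$, using $\beta-\alpha>0$ together with $2r_i\le 2\varepsilon$ (terms with $r_i=0$ vanish on both sides). Summing over $i$ and taking the infimum over covers, during which the constant $(2\varepsilon)^{\beta-\alpha}$ pulls out, gives $m_\beta^\varepsilon(E)\le(2\varepsilon)^{\beta-\alpha}m_\alpha^\varepsilon(E)\le(2\varepsilon)^{\beta-\alpha}m_\alpha(E)$. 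Letting $\varepsilon\to 0$ and using $\beta-\alpha>0$ forces $m_\beta(E)=0$. I note the contrapositive for later use: if $m_\beta(E)>0$ then $m_\alpha(E)=\infty$ for every $\alpha<\beta$.

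Next I would set $\alpha^\star:=\inf\{\alpha>0:m_\alpha(E)=0\}$, with the convention $\inf\emptyset=\infty$, and verify the two implications of the jump. If $\alpha>\alpha^\star$, the definition of the infimum yields some $\alpha'$ with $\alpha'<\alpha$ and $m_{\alpha'}(E)=0<\infty$, so the key lemma gives $m_\alpha(E)=0$. If $0<\alpha<\alpha^\star$ and one had $m_\alpha(E)<\infty$, then choosing any $\beta$ with $\alpha<\beta<\alpha^\star$ the key lemma would give $m_\beta(E)=0$, contradicting $\beta<\inf\{\alpha>0:m_\alpha(E)=0\}$; hence $m_\alpha(E)=\infty$. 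This is exactly the stated property. Uniqueness is then immediate: if two distinct numbers both had this property, any value strictly between them would be forced to equal both $0$ and $\infty$.

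Finally I would read off the two characterizations. By construction $\alpha^\star=\inf\{\alpha>0:m_\alpha(E)=0\}$. The two implications show that $A_\infty:=\{\alpha>0:m_\alpha(E)=\infty\}$ contains the interval $(0,\alpha^\star)$ and is disjoint from $(\alpha^\star,\infty)$, so $(0,\alpha^\star)\subseteq A_\infty\subseteq(0,\alpha^\star]$ and therefore $\sup A_\infty=\alpha^\star$; this gives $\alpha^\star=\sup\{\alpha>0:m_\alpha(E)=\infty\}=\inf\{\alpha>0:m_\alpha(E)=0\}=\dim(E)$. The only genuine content is the scaling lemma of the second paragraph, which is the single place a real estimate enters; everything afterward is a formal consequence of the jump it produces. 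The main points requiring a little care are the degenerate radii $r_i=0$ in the factorization and the edge cases $\alpha^\star\in\{0,\infty\}$, where the stated conventions for empty sup and inf render the boundary implications vacuous and keep all three expressions in agreement.
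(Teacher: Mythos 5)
Your proof is correct, but note that there is nothing in the paper to compare it against: the paper does not prove this lemma, it imports it verbatim from Adler's book (the bracketed citation \emph{is} the paper's ``proof''), so you have supplied the standard textbook argument for a quoted fact. Your argument is sound in all its steps: the monotonicity of $m_\alpha^\varepsilon(E)$ as $\varepsilon\downarrow 0$, the scaling estimate $m_\beta^\varepsilon(E)\le (2\varepsilon)^{\beta-\alpha}m_\alpha^\varepsilon(E)\le (2\varepsilon)^{\beta-\alpha}m_\alpha(E)$ (which is indeed the only place an actual estimate enters), the definition $\alpha^\star:=\inf\{\alpha>0: m_\alpha(E)=0\}$, the two implications, uniqueness via a squeezed intermediate value, and the sandwich $(0,\alpha^\star)\subseteq A_\infty\subseteq(0,\alpha^\star]$ giving $\sup A_\infty=\alpha^\star$; your handling of $r_i=0$ and of the value $m_{\alpha^\star}(E)$ being arbitrary is also correct. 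The one point you leave to ``conventions'' that deserves a line of proof: the lemma calls $\alpha^\star$ a \emph{number}, and since $E\subset\mathbb{R}^n$ one in fact always has $\alpha^\star\le n$, so the case $\inf\emptyset=\infty$ never occurs. To see this, cover $[0,1]^n$ by $O(\varepsilon^{-n})$ balls of radius $\varepsilon$, so that $\sum_i (2r_i)^\alpha = O(\varepsilon^{\alpha-n})\to 0$ and hence $m_\alpha([0,1]^n)=0$ for every $\alpha>n$; countable subadditivity of $m_\alpha$ over a grid of unit cubes covering $\mathbb{R}^n$ then yields $m_\alpha(E)=0$ for all $\alpha>n$, so the set $\{\alpha>0: m_\alpha(E)=0\}$ is nonempty and $\alpha^\star$ is a genuine real number in $[0,n]$. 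With that one addition your proof fully establishes the statement as phrased.
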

	The Hausdorff dimension fulfills the following properties.
	\begin{itemize}
		\item \textit{Countable stability \cite[2.2 Hausdorff dimension]{Falconer2013fractal}:} If $F_1,F_2,\dots $ is a (countable) sequence of sets then $$\dim \Big(\bigcup\limits_{i=1}^\infty F_i\Big)=\sup\limits_{i\in\mathbb{N}}\dim (F_i).$$
		\item\textit{Monotonicity \cite[2.2 Hausdorff dimension]{Falconer2013fractal}:} If $E\subset F$ then $\dim (E)\leq \dim (F)$.
		\item\textit{Invariance under bi-Lipschitz mapping \cite[Corollary 2.4]{Falconer2013fractal}:} If $f:E\to \R^N$ is a bi-Lipschitz mapping, that is, there exist constants $c_1\geq c_2>0$ such that for every $x,y\in E$ 
		$$c_1\|x-y\|\leq \|f(x)-f(y)\|\leq c_2\|x-y\|,$$
		then $\dim(E)=\dim(f(E))$.
	\end{itemize}
	\section{Uniform case}\label{sc:uniform}
	In this section, we analyze the multivariate Bessel process $\{X(t)\}_{t\geq 0}$ in the uniform case, that is, $k(\alpha)=k$ for all $\alpha\in R_+$ and its times hitting the boundary of the Weyl chamber, namely $X^{-1}(\partial W)$. We focus on the Hausdorff dimension of these times and we want to apply a well-known result of Liu and Xiao that determines the Hausdorff dimension of the times when a self-similar process hits the origin, see \cite[4. Hausdorff dimension of the zero set]{LiuXiao98}. In particular, the machinery in \cite{LiuXiao98} covers the case of a classical Bessel process hitting the origin. 
	
	Hence, we seek a suitable space-time transformation of the multivariate Bessel process, which turns it into a squared classical Bessel process, such that we can apply the results in \cite{LiuXiao98}. For this goal, we first define the squared alternating polynomial:
	\begin{align}
		V(x)&:=\prod_{\alpha\in R_+}\langle\alpha, x\rangle^2. 
		\label{eq:squared_alternating_polynomial}
	\end{align}
	Note that if $V(x)=0$, then there exists $\alpha\in R_+$ such that $\langle\alpha,x\rangle=0$, which implies that $x\in\partial W$ by definition; as the converse is also true, it follows that $X(t)$ hits $\partial W$ if and only if $V(X(t))=0$. Next, we calculate the derivatives of $V(x)$:
	\begin{align}
		\frac{\partial}{\partial x_i}V(x)&= 2V(x)\sum_{\alpha\in R_+}\frac{\alpha_i}{\langle\alpha, x\rangle},\notag\\
		\frac{\partial^2}{\partial x_i^2}V(x)&= 4V(x)\bigg(\sum_{\alpha\in R_+}\frac{\alpha_i}{\langle\alpha, x\rangle}\bigg)^2-2V(x)\sum_{\alpha\in R_+}\frac{\alpha_i^2}{\langle\alpha, x\rangle^2}\notag\\
		&= 4V(x)\bigg(\sum_{\alpha\in R_+}\frac{\alpha_i^2}{\langle\alpha, x\rangle^2}\notag%\\
		%&\quad
		+\sum_{\substack{\alpha,\beta\in R_+\\\alpha\neq\beta}}\frac{\alpha_i}{\langle\alpha, x\rangle}\cdot\frac{\beta_i}{\langle\beta, x\rangle}\bigg)-2V(x)\sum_{\alpha\in R_+}\frac{\alpha_i^2}{\langle\alpha, x\rangle^2}\notag\\
		&= 2V(x)\sum_{\alpha\in R_+}\frac{\alpha_i^2}{\langle\alpha, x\rangle^2}+4V(x)\sum_{\substack{\alpha,\beta\in R_+\\\alpha\neq\beta}}\frac{\alpha_i}{\langle\alpha, x\rangle}\cdot\frac{\beta_i}{\langle\beta, x\rangle}\notag\\
		&= 2V(x)\sum_{\alpha\in R_+}\frac{\alpha_i^2}{\langle\alpha, x\rangle^2}.\notag
	\end{align}
	By \cite[Lemma 6.4.6]{DunklXu}, the second term in the second-to-last line cancels, yielding the last line. Since $X(t)\in \partial W=\{y\in W\cond{}  \exists\alpha\in R_+:\langle \alpha, y\rangle=0\}$ holds if and only if $V(X(t))=0$, we then examine the times when the transformed process hits the origin. Below, the space transformation $V(X(t))$ will be analyzed in detail. 
	\begin{lemma}
		\label{lm:time_change}
		Assume the multiplicity function is uniform, that is, $k(\alpha)=k$ for all $\alpha\in R_+$, and define the function
		\begin{align}
			S(x)&:=V(x)\sum_{\alpha\in R_+}\frac{\|\alpha\|^2}{\langle\alpha, x\rangle^2}.\label{eq:TimeChangeDer}
		\end{align}
		Then, the process $\{Y(t)\}_{t\geq 0}:=\{V(X(t))\}_{t\geq0}$ is equivalent in distribution to a classical squared Bessel process of index $\nu=k-1/2$ with time change $s^{-1}$ given through
		\begin{align}
			s(t):=\int\limits_0^tS(X(\tau))\intd\tau.\label{eq:RandomTimeChange}
		\end{align}
	\end{lemma}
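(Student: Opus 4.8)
The plan is to apply Itô's formula to $Y(t)=V(X(t))$ using the two derivative formulas computed just above the lemma, and to show that after the random clock $s(t)$ the resulting one-dimensional diffusion is exactly a squared Bessel process of dimension $\delta=2k+1$, hence of index $\nu=\delta/2-1=k-1/2$. Write $f(x):=\sum_{\alpha\in R_+}\alpha/\langle\alpha,x\rangle$, so that $\frac{\partial}{\partial x_i}V=2Vf_i$, and recall that by the cancellation of \cite[Lemma 6.4.6]{DunklXu} one has $\sum_{i=1}^N\frac{\partial^2}{\partial x_i^2}V=2V\sum_{\alpha\in R_+}\|\alpha\|^2/\langle\alpha,x\rangle^2=2S$. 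Since the uniform drift of \eqref{eq:MultiBesselSDE} has $i$-th component $k\,f_i(X)$ and the Brownian part has unit diffusion, Itô's formula gives
\begin{align*}
\ud Y(t)=2V(X(t))\sum_{i=1}^N f_i(X(t))\,\ud B_i(t)+2k\,V(X(t))\,\|f(X(t))\|^2\,\ud t+S(X(t))\,\ud t.
\end{align*}

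The key simplification will come from reusing the \emph{same} root-system identity, now applied to the cross terms of $\|f\|^2$. Expanding the square and splitting into diagonal and off-diagonal parts yields $\|f(x)\|^2=\sum_{\alpha\in R_+}\|\alpha\|^2/\langle\alpha,x\rangle^2+\sum_{\alpha\neq\beta}\langle\alpha,\beta\rangle/(\langle\alpha,x\rangle\langle\beta,x\rangle)$, and the off-diagonal sum vanishes by the component-wise cancellation of \cite[Lemma 6.4.6]{DunklXu} summed over $i$; thus $\|f(x)\|^2=S(x)/V(x)$. Substituting this collapses the drift, since $2kV\|f\|^2=2kS$, and it identifies the quadratic variation of the martingale part $M(t):=\int_0^t 2V(X(\tau))\sum_{i=1}^N f_i(X(\tau))\,\ud B_i(\tau)$ as $\ud\langle M\rangle_t=4V(X)^2\|f(X)\|^2\,\ud t=4V(X)S(X)\,\ud t=4Y(t)S(X(t))\,\ud t$. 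Hence
\begin{align*}
\ud Y(t)=\ud M(t)+(2k+1)\,S(X(t))\,\ud t,\qquad \ud\langle M\rangle_t=4\,Y(t)\,S(X(t))\,\ud t.
\end{align*}

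Next I would carry out the time change. Setting $s(t)=\int_0^t S(X(\tau))\intd\tau$ as in \eqref{eq:RandomTimeChange} with inverse $\tau=s^{-1}$, and writing $\tilde Y(s):=Y(\tau(s))$, $\tilde M(s):=M(\tau(s))$, the substitution $u=s(\tau)$ (so that $\ud u=S(X)\,\ud\tau$) transforms the two displays into $\ud\tilde Y=\ud\tilde M+(2k+1)\ud s$ and $\ud\langle\tilde M\rangle_s=4\tilde Y(s)\,\ud s$. Because the quadratic variation is absolutely continuous with density $(2\sqrt{\tilde Y})^2$, the representation theorem for continuous local martingales (Dubins–Schwarz, e.g. Revuz–Yor, Thm.~V.1.6, enlarging the probability space if necessary) furnishes a Brownian motion $W$ with $\tilde M(s)=\int_0^s 2\sqrt{\tilde Y(u)}\,\ud W(u)$. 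Therefore $\tilde Y$ solves $\ud\tilde Y=2\sqrt{\tilde Y}\,\ud W+(2k+1)\ud s$, which is precisely the SDE of a squared Bessel process of dimension $2k+1$, i.e. index $k-1/2$; pathwise uniqueness for this equation yields the asserted equivalence in distribution, with $Y(t)=\tilde Y(s(t))$.

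The main obstacle will be justifying that the clock $s$ is legitimate. Since $V(x)/\langle\alpha,x\rangle^2=\prod_{\beta\in R_+,\,\beta\neq\alpha}\langle\beta,x\rangle^2$, the integrand $S(x)=\sum_{\alpha\in R_+}\|\alpha\|^2\prod_{\beta\neq\alpha}\langle\beta,x\rangle^2$ is a genuine polynomial, hence continuous and locally bounded \emph{including on} $\partial W$, so $s$ is finite and continuous. Moreover $S\geq 0$ with $S(x)=0$ only when every summand vanishes, which forces $x$ to lie on the intersection of at least two distinct root hyperplanes, a set of codimension at least two; since $X(t)$ admits the density \eqref{eq:density} for each $t>0$, Fubini gives $\mathbb{E}\int_0^T\mathbf{1}\{S(X(t))=0\}\,\ud t=0$, so almost surely $S(X(t))>0$ for a.e.\ $t$ and $s$ is strictly increasing with continuous inverse $s^{-1}$. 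The remaining delicate point is the Dubins–Schwarz step across collision times, where $Y=0$ and $2\sqrt{\tilde Y}=0$: there $\tilde M$ is locally constant and the diffusion coefficient degenerates, so the Brownian motion $W$ must be defined on an enlarged space on the zero set of $\tilde Y$, and one must confirm that $s(t)\to\infty$ as $t\to\infty$ so that the representation holds on all of $[0,\infty)$.
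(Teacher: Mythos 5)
Your proposal is correct and takes essentially the same approach as the paper: It\^o's formula applied to $V(X(t))$, cancellation of the cross terms via the Dunkl--Xu identity, and the clock \eqref{eq:RandomTimeChange} reducing the SDE to that of a squared Bessel process of dimension $2k+1$, with your strict-monotonicity argument for $s$ matching the paper's Lemma~\ref{lm:bijection}. The only (cosmetic) difference is the order of operations: you time-change the semimartingale first and then extract the Brownian motion via the martingale representation theorem on an enlarged space (carefully flagging the degeneracy on the zero set of $\widetilde Y$), whereas the paper first constructs $W$ explicitly, verifies $[W]_t=t$, and then applies {\O}ksendal's random time-change theorem.
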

	\begin{proof}
		First, we use the It\^o formula to calculate the SDE of $Y(t)$:
		\begin{align}
			\ud Y(t)&= 2V(X(t))\sum_{\alpha\in R_+}\frac{\langle\alpha,\ud X(t)\rangle}{\langle\alpha, X(t)\rangle}+V(X(t))\sum_{\alpha\in R_+}\frac{\|\alpha\|^2}{\langle\alpha, X(t)\rangle^2}\intd t\notag\\
			&\stackrel{\eqref{eq:MultiBesselSDE}}{=} 2V(X(t))\sum_{\alpha\in R_+}\frac{1}{\langle\alpha, X(t)\rangle}\Big\langle\alpha,\ud B(t)+\sum_{\beta\in R_+}k(\beta)\frac{\beta}{\langle\beta, X(t)}\intd t\Big\rangle\notag\\
			&\quad
			+V(X(t))\sum_{\alpha\in R_+}\frac{\|\alpha\|^2}{\langle\alpha, X(t)\rangle^2}\intd t\notag\\
			&= 2V(X(t))\sum_{\alpha\in R_+}\frac{\langle\alpha,\ud B(t)\rangle}{\langle\alpha, X(t)\rangle}+2V(X(t))\sum_{\alpha,\beta\in R_+}k(\beta)\frac{\langle\alpha,\beta\rangle}{\langle\alpha, X(t)\rangle\langle\beta, X(t)\rangle}\intd t\notag\\
			&\quad+V(X(t))\sum_{\alpha\in R_+}\frac{\|\alpha\|^2}{\langle\alpha, X(t)\rangle^2}\intd t\notag\\
			&= 2V(X(t))\sum_{\alpha\in R_+}\frac{\langle\alpha,\ud B(t)\rangle}{\langle\alpha, X(t)\rangle}+2V(X(t))\sum_{\substack{\alpha,\beta\in R_+\\\alpha\neq\beta}}k(\beta)\frac{\langle\alpha,\beta\rangle}{\langle\alpha, X(t)\rangle\langle\beta, X(t)\rangle}\intd t\notag\\
			&\quad+V(X(t))\sum_{\alpha\in R_+}(2k(\alpha)+1)\frac{\|\alpha\|^2}{\langle\alpha, X(t)\rangle^2}\intd t.\notag
		\end{align}
		A slight modification of \cite[Lemma 6.4.6]{DunklXu} yields that the last term on the second-to-last line vanishes, and we obtain
		\begin{align}
			\ud Y(t)&= 2V(X(t))\sum_{\alpha\in R_+}\frac{\langle\alpha,\ud B(t)\rangle}{\langle\alpha, X(t)\rangle}+V(X(t))\sum_{\alpha\in R_+}(2k(\alpha)+1)\frac{\|\alpha\|^2}{\langle\alpha, X(t)\rangle^2}\intd t.
			\label{eq:calculation_of_V(X)}
		\end{align}
		Using the assumption $k(\alpha)=k$, the SDE becomes
		\begin{align}
			\ud Y(t)&= 2V(X(t))\sum_{\alpha\in R_+}\frac{\langle\alpha,\ud B(t)\rangle}{\langle\alpha, X(t)\rangle}+(2k+1)V(X(t))\sum_{\alpha\in R_+}\frac{\|\alpha\|^2}{\langle\alpha, X(t)\rangle^2}\intd t.\notag\\
			&= 2Y(t)\sum_{\alpha\in R_+}\frac{\langle\alpha,\ud B(t)\rangle}{\langle\alpha, X(t)\rangle}+(2k+1)Y(t)\sum_{\alpha\in R_+}\frac{\|\alpha\|^2}{\langle\alpha, X(t)\rangle^2}\intd t\notag
			\\
			&\stackrel{\eqref{eq:TimeChangeDer}}{=}2Y(t)\sum_{\alpha\in R_+}\frac{\langle\alpha,\ud B(t)\rangle}{\langle\alpha, X(t)\rangle}+(2k+1)S(X(t))\intd t.\label{eq:newSDE_Y}
		\end{align}
		Clearly, this is a semimartingale. Now, we would like to rewrite the martingale part in terms of one Brownian motion instead of a linear combination of them, and for that we define the process $\{W(t)\}_{t\geq 0}$ by $W(0)=0$ and
		\begin{align*}
			W(t):=\int_0^t\frac{\sqrt{Y(\tau)}}{\sqrt{S(X(\tau))}}\sum_{\alpha\in R_+}\frac{\langle\alpha,\ud B(\tau)\rangle}{\langle\alpha,X(\tau)\rangle}.
		\end{align*}
		This prompts us to rewrite the martingale term as follows:
		\begin{align}
			\label{eq:martingale_part_Y}
			2Y(t)\sum_{\alpha\in R_+}\frac{\langle\alpha,\ud B(t)\rangle}{\langle\alpha, X(t)\rangle}=2\sqrt{Y(t)}\sqrt{S(X(t))}\intd W(t).
		\end{align}
		To see that $\{W(t)\}_{t\geq 0}$ is a Brownian motion, we derive the quadratic variation $[W]_t$: %A straightforward calculation and \cite[Lemma 6.4.6]{DunklXu} reveal that $[W]_t=t$. 
		\begin{align*}
			[W]_t&=\bigg[\int_0^\cdot\frac{\sqrt{Y(\tau)}}{\sqrt{S(X(\tau))}}\sum_{\alpha\in R_+}\frac{\langle\alpha,\ud B(\tau)\rangle}{\langle\alpha,X(\tau)\rangle}\bigg]_t\\
			&=\int_0^t\frac{Y(\tau)}{S(X(\tau))}\bigg[\sum_{\alpha\in R_+}\frac{\langle\alpha,\ud B\rangle}{\langle\alpha,X\rangle}\bigg]_\tau\\
			&=\int_0^t\frac{Y(\tau)}{S(X(\tau))}\sum_{\alpha,\beta\in R_+}\frac{\langle\alpha,\beta\rangle}{\langle\alpha, X(\tau)\rangle\langle\beta, X(\tau)\rangle}\intd \tau\\
			&=\int_0^t\frac{Y(\tau)}{S(X(\tau))}\sum_{\alpha\in R_+}\frac{\|\alpha\|^2}{\langle\alpha, X(\tau)\rangle^2}\intd \tau\\
			&\stackrel{\eqref{eq:TimeChangeDer}}{=}\int_0^t\intd \tau=t. 
		\end{align*}
		In the third line we applied the well-known covariation of the Brownian motion, $[B_i,B_j]_t=\delta_{ij}t$, whereas the fourth line holds, again, by \cite[Lemma 6.4.6]{DunklXu}.
		Since $\{X(t)\}_{t\geq0}$ is a Markov process, $\{Y(t)\}_{t\geq0}=\{V(X(t))\}_{t\geq0}$ as an image under a continuous mapping is also a Markov process. It follows that $\{W(t)\}_{t\geq0}$ has independent increments and therefore it is a Brownian motion.
		Combining \eqref{eq:newSDE_Y} and \eqref{eq:martingale_part_Y}, we can write
		\begin{align}
			\ud Y(t)&=2\sqrt{Y(t)}\sqrt{S(X(t))}\intd W(t)+(2k+1)S(X(t))\intd t.
			\label{eq:new_SDE_for_V(X)}
		\end{align}
		Now, we consider the random time change $s^{-1}$, see \eqref{eq:RandomTimeChange}. In \Cref{lm:bijection} below we will show that $s$ is a bijection almost surely and hence the time change is well-defined.
		%Now, we consider the random time change \eqref{eq:RandomTimeChange}. We denote the right-inverse of $s(t)$ by 
		%\[s^{-1}(t)=\inf\{\tau>0:s(\tau)>t\}.\]
		%This time change on the right-hand side is the inverse since $s$ is a bijection as shown in Lemma~\ref{lm:bijection} below. 
		Then, we define the process $\{\widetilde{Y}(t)\}_{t\geq0}$ by
		\[\widetilde{Y}(t):=Y(s^{-1}(t)).\]
		It follows by \cite[Theorem 8.5.7]{oksendal2003} that $\{\widetilde{Y}(t)\}_{t\geq0}$ is given by the SDE
		\[\ud \widetilde{Y}(t)=2\sqrt{\widetilde{Y}(t)}\intd \widetilde W(t)+(2k+1)\intd t,\]
		where $\{\widetilde W(t)\}_{t\geq 0}$ is a Brownian motion.\footnote{Following the lines of \cite[Section 8.5 Random Time Change]{oksendal2003} we choose as volatility $\sigma(x):=2\sqrt{x}$ and drift $ b(x):=2k+1$. The corresponding functions for the time change are $ c(t):=S(X(t)), \beta(t):=\int_0^tc(t)=s(t)$ and $\alpha(t):=s^{-1}(t).$}
		%\[Z(t):=\int\limits_0^{s^{-1}(t)}\sqrt{S(X(\tau))}\intd W(\tau)\]
		%is a Brownian motion. 
		This is nothing but a classical squared Bessel process of dimension $2k+1$, or index $\nu=k-1/2$, as claimed. 
	\end{proof}
	The result above implies that the Hausdorff dimension of the hitting times to the origin of a classical squared Bessel process coincides with that of the process $\{V(X(t))\}_{t\geq0}$ provided \eqref{eq:RandomTimeChange} is a bijection and a bi-Lipschitz mapping on any compact interval almost surely. We prove the first of these two statements below.
	\begin{lemma}
		The time change $s(t)=\int_0^tS(X(\tau))\intd\tau$ is a bijection on $[0,\infty)$ almost surely. \label{lm:bijection}
	\end{lemma}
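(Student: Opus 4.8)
The plan is to show that $s(t)=\int_0^t S(X(\tau))\intd\tau$ is a continuous, strictly increasing surjection from $[0,\infty)$ onto $[0,\infty)$, since these three properties together give bijectivity. Continuity follows immediately: the integrand $S(X(\tau))$ is nonnegative and, since $\{X(t)\}_{t\geq0}$ is a continuous process that lives in the interior of $W$ for almost every fixed time, $s$ is an integral of a locally integrable function and hence continuous (indeed absolutely continuous) in $t$. Strict monotonicity is the key structural observation: from the definition \eqref{eq:TimeChangeDer}, $S(x)=V(x)\sum_{\alpha\in R_+}\|\alpha\|^2/\langle\alpha,x\rangle^2$ is strictly positive whenever $x$ lies in the interior of $W$, because each summand is positive there and $V(x)>0$. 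The only way $s$ could fail to be strictly increasing is if $X(\tau)\in\partial W$ on a set of positive Lebesgue measure; but the collision set $X^{-1}(\partial W)$ has Lebesgue measure zero almost surely (it is a proper closed set of times whose complement is dense, and more strongly the process spends zero time on $\partial W$), so $S(X(\tau))>0$ for almost every $\tau$ and $s$ is strictly increasing.

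The remaining and genuinely substantive point is surjectivity, namely that $\lim_{t\to\infty}s(t)=\infty$ almost surely. Since $s$ is nondecreasing, it suffices to rule out the possibility that the integral $\int_0^\infty S(X(\tau))\intd\tau$ converges to a finite limit. The plan is to exploit the time-change identity itself: by Lemma~\ref{lm:time_change}, $\widetilde Y(t)=Y(s^{-1}(t))$ is a classical squared Bessel process run on the whole half-line $[0,\infty)$, so the composition $s^{-1}$ must also be defined on all of $[0,\infty)$. Equivalently, one argues directly that $Y(t)=V(X(t))$ does not converge to a trap as $t\to\infty$: the multivariate Bessel process is recurrent in the relevant regime and returns to regions where $V$ is bounded away from $0$ infinitely often, forcing $S(X(\tau))$ to be bounded below on recurring time intervals of non-vanishing length, whence the integral diverges.

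I expect surjectivity at infinity to be the main obstacle, because it is the only part that is not a routine consequence of positivity of the integrand. The cleanest route is probably to avoid a delicate pathwise recurrence argument and instead invoke the self-consistency of the time change: the squared Bessel process $\widetilde Y$ of index $\nu=k-1/2$ is defined for all time, and the relation $t=s(s^{-1}(t))$ propagated to $t\to\infty$ forces $s$ to have range $[0,\infty)$. One should be careful to state this as a genuine two-sided argument rather than assuming what is to be proved, so I would first establish continuity and strict monotonicity (which already give injectivity and a continuous inverse on the range $[0,\sup_t s(t))$), then separately pin down $\sup_t s(t)=\infty$ using the fact that the limiting squared Bessel process has infinite lifetime. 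With these three facts in hand, $s:[0,\infty)\to[0,\infty)$ is a continuous strictly increasing surjection and therefore a bijection almost surely, as claimed.
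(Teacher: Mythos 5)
Your skeleton (continuity, strict monotonicity via a measure-zero occupation time, surjectivity) matches the paper's, but the two places where you lean hardest are exactly where the gaps are. First, strict monotonicity: you justify that $X^{-1}(\partial W)$ has Lebesgue measure zero by saying it is ``a proper closed set of times whose complement is dense'' --- that does not imply measure zero (fat Cantor sets are closed, nowhere dense, and of positive Lebesgue measure), and your stronger parenthetical ``the process spends zero time on $\partial W$'' is precisely the assertion that needs proof. Recall that for $\min_{\alpha}k(\alpha)<1/2$ the process hits $\partial W$ almost surely, so the collision set is nonempty and the claim is not soft. The paper's proof of the lemma consists essentially of establishing it: one sets $\Theta(t)=\int_0^t\mathbf 1_{\{S(X(\tau))=0\}}\intd\tau$, applies Fubini, and uses the explicit transition density \eqref{eq:density}, whose weight factor $w_k$ from \eqref{eq:weight_function} vanishes on each wall $\partial W^\beta$, to get $\Pbb\big(X(\tau)\in\partial W\big)=0$ for each fixed $\tau>0$, hence $\mbb E[\Theta(t)]=0$ and $\Theta(t)=0$ almost surely. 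Some such argument (absolute continuity of the fixed-time law with a density vanishing on $\partial W$, plus Fubini) must be supplied; without it the heart of the lemma is missing.

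Second, surjectivity: your ``cleanest route'' is circular. \Cref{lm:time_change} defines $\widetilde Y(t)=Y(s^{-1}(t))$ only after invoking the present lemma to guarantee that $s^{-1}$ is well-defined on $[0,\infty)$ --- the paper states this dependence explicitly --- so you cannot use the infinite lifetime of the time-changed squared Bessel process to conclude $\sup_t s(t)=\infty$. Your fallback, recurrence of $X$, is also false in general: by \eqref{eq:MultiBesselSDE}, $\|X(t)\|^2$ is a classical squared Bessel process of dimension $N+2\sum_{\alpha\in R_+}k(\alpha)$, which exceeds $2$ whenever $N\geq2$, so the process is transient and does not return to any fixed compact set infinitely often; whether $V(X(t))$ stays bounded away from zero along the escaping trajectory would itself require an argument. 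To be fair, the paper is terse on this point as well --- it deduces the bijection from continuity, $s(0)=0$, and strict monotonicity, which strictly speaking yields a bijection onto $\big[0,\lim_{t\to\infty}s(t)\big)$ --- but a correct completion of your proposal would have to argue divergence of $s$ directly (for instance via transience together with the homogeneity of the polynomial $S$), not via the time-changed process whose very construction presupposes the lemma you are proving.
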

	\begin{proof}
		For the proof we recall the definition of $S$: 
		\begin{align}
			\begin{split}
				\label{eq:proof_time_change_bijection}
				S(x)&=V(x)\sum_{\alpha\in R_+}\frac{\|\alpha\|^2}{\langle\alpha, x\rangle^2}, \\
				V(x)&=\prod_{\alpha\in R_+}\langle\alpha, x\rangle^2.
			\end{split}
		\end{align}
		The multivariate Bessel process is almost surely continuous \cite[Theorem 4.10]{RoslerVoit98}.  $S$ is a polynomial and hence we recognize that the time change $s$ is a continuous mapping almost surely. 
		
		In the following, we will show for every $t>0$ that $\{\tau\in[0,t]: S(X(\tau))=0\}$ forms a set of Lebesgue measure zero almost surely. We then conclude that the time change is strictly monotonously increasing almost surely, as it is given by the integral over $S(X(t))$ and $S(X(t))\geq 0$, see \eqref{eq:proof_time_change_bijection}. Combined with $s(0)=0$ and the almost sure continuity, the statement that $s$ is almost surely a bijection on $[0,\infty)$ follows. For $0<t<\infty$ we define 
		\begin{align*}
			\Theta(t):=\int\limits_0^t\mathbf{1}_{\{S(X(\tau))=0\}}\intd \tau.
		\end{align*}
		Our aim is to prove $\Theta(t)=0$ almost surely.
		Its expectation is given by
		\begin{align*}
			0\leq \mbb{E}[\Theta(t)]&=\mbb{E}\left[\int\limits_0^t\mathbf{1}_{\{S(X(\tau))=0\}}\intd \tau\right]
			=\int\limits_0^t\mbb{E}\left[\mathbf{1}_{\{S(X(\tau))=0\}}\right]\intd \tau\\
			&=\int\limits_0^t\mbb{P}\left(S(X(\tau))=0\right)\intd \tau\stackrel{\eqref{eq:proof_time_change_bijection}}{\leq}
			\int\limits_0^t\Pbb\big(V(X(\tau)) = 0\big)\intd \tau\\
			&\stackrel{\eqref{eq:proof_time_change_bijection}}{=}\int\limits_0^t\mbb{P}\left(\exists \beta\in R_+: \langle\beta,X(\tau)\rangle=0\right)\intd \tau.
		\end{align*}
		To bound the probability in the last line, we define the boundary associated to the root $\beta\in R_+$ by 
		\begin{align}
			\label{eq:boundary_specific_root}
			\partial W^\beta:=\{y\in W\cond{}  \langle \beta, y\rangle=0\}
		\end{align}
		with $$\bigcup\limits_{\beta\in R_+}\partial W^\beta=\partial W.$$
		Hence, we derive
		\begin{align*}
			0\leq \mbb{E}[\Theta(t)] &\leq \int\limits_0^t\mbb{P}\left(\exists \beta\in R_+: \langle\beta,X(\tau)\rangle=0\right)\intd \tau\\
			&=\int\limits_0^t\mathbb P \big(\exists \beta\in R_+: X(\tau)\in \partial W^\beta\big)\intd \tau\\
			&\leq \sum\limits_{\beta\in R_+}\int\limits_0^t\mathbb P \big(X(\tau )\in \partial W^\beta\big)\intd \tau.%\\
			%&=\sum\limits_{\beta\in R_+}\int\limits_0^t\mathbb P \big(X(1)\in \partial W^\beta\big)\intd \tau.
		\end{align*}
		%In the last line, we apply the positive-invariance of $\partial W^\beta$ under multiplication, that is, $c\cdot \partial W^\beta=\partial W^\beta$ for every $c>0$. 
		Next, we insert the explicit form of the density: 
		\begin{align*}
			0\leq \mbb{E}[\Theta(t)]
			&\leq \sum\limits_{\beta\in R_+}\int\limits_0^t\mathbb P \big(X(\tau)\in \partial W^\beta\big)\intd \tau\\
			&\stackrel{\eqref{eq:density}}{=}\sum\limits_{\beta\in R_+} \int\limits_0^tc_k^{-1}\int\limits_{\partial W^\beta}e^{-\frac{\|x\|^2+\|y\|^2}{2\tau^{N/2}}}D_{k}\Big(\frac{x}{\sqrt{\tau}},\frac{y}{\sqrt{\tau}}\Big)w_k\Big(\frac{y}{\sqrt{\tau}}\Big)\ud y\ud \tau\\
			&\stackrel{\eqref{eq:weight_function}}{=}\sum\limits_{\beta\in R_+}\int\limits_0^tc_k^{-1}\int\limits_{\partial W^\beta}e^{-\frac{\|x\|^2+\|y\|^2}{2\tau^{N/2}}}D_{k}\Big(\frac{x}{\sqrt{\tau}},\frac{y}{\sqrt{\tau}}\Big)\underbrace{\prod\limits_{\alpha\in R_+}\Big\langle \frac{y}{\sqrt{\tau}},\alpha\Big\rangle^{2k}}_{=0 \text{ since } \langle y, \beta\rangle=0}\ud y\intd \tau\\
			&=0.
		\end{align*}
		This indicates that the times $\tau\in[0,t]$ for which $S(X(\tau))=0$ form a set of Lebesgue measure zero almost surely. Therefore, the time change is strictly monotonously increasing on every compact interval $[0,t]$ almost surely with $0<t<\infty$ and hence has an inverse almost surely. 
\end{proof}
%In essence, the last part of the proof uses the fact that the set where $S(x)$ cancels is a subset of $\partial W$, itself a set of Lebesgue measure zero, and therefore the probability of having $S(X(t))=0$ is zero for any time $t$ as claimed. This means that in the rare situation in which there is a time $t$ where $S(X(t))$ cancels, which is a zero-probability event, this time is isolated and the time change $s^{-1}$ exists. 

We are now ready to prove our result for the uniform multiplicity case; we also show that the time change $s^{-1}(t)$ is bi-Lipschitz on compact intervals as one of the crucial steps in the proof. 
\begin{theorem}\label{th:main_uniform_case}
	%Fix $x\in \overline W$. 
	The Hausdorff dimension of collision times for the multivariate Bessel process with uniform multiplicities, that is $k(\alpha)=k$ for all $\alpha\in R_+$, is given by
	\begin{equation}
		\dim\big( X^{-1}(\partial W)\big)=\max\Big\{0,\frac{1}{2}-k\Big\}\notag
	\end{equation}
	almost surely.
\end{theorem}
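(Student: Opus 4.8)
The plan is to identify the collision-time set as the zero set of $Y(t)=V(X(t))$ and then transport the known Hausdorff dimension of the zero set of a squared Bessel process through the time change of Lemma~\ref{lm:time_change}. Since $V(x)=0$ if and only if $x\in\partial W$, we have $X^{-1}(\partial W)=\{t\ge 0:Y(t)=0\}$, so it suffices to compute the Hausdorff dimension of this zero set. By Lemma~\ref{lm:time_change} I would write $Y(t)=\widetilde Y(s(t))$, where $\widetilde Y$ is a classical squared Bessel process of index $\nu=k-1/2$ (dimension $2k+1$) and $s$ is the time change of \eqref{eq:RandomTimeChange}, which is a bijection almost surely by Lemma~\ref{lm:bijection}. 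Consequently
\[
\{t\ge 0:Y(t)=0\}=s^{-1}\big(\{u\ge 0:\widetilde Y(u)=0\}\big),
\]
that is, the collision-time set is the image under $s^{-1}$ of the zero set $Z:=\{u\ge 0:\widetilde Y(u)=0\}$ of a squared Bessel process.

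First I would record the dimension of $Z$. The square root of $\widetilde Y$ is a classical Bessel process of index $\nu=k-1/2$, which is self-similar, so the Liu--Xiao machinery \cite{LiuXiao98} applies directly. When $k\ge 1/2$ the dimension $2k+1\ge 2$ and the origin is polar, so $Z=\emptyset$ almost surely and $\dim Z=0$; when $0<k<1/2$ the origin is reached and $\dim Z=1-\tfrac12(2k+1)=\tfrac12-k$ almost surely. In both regimes $\dim Z=\max\{0,\tfrac12-k\}$.

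The main step is to show that $s^{-1}$ is bi-Lipschitz on every compact interval, so that the bi-Lipschitz invariance of the Hausdorff dimension lets me pass $\dim$ through $s^{-1}$. Differentiating \eqref{eq:RandomTimeChange} gives $s'(t)=S(X(t))$, and clearing the denominators in \eqref{eq:TimeChangeDer} shows
\[
S(x)=\sum_{\beta\in R_+}\|\beta\|^2\prod_{\substack{\alpha\in R_+\\ \alpha\ne\beta}}\langle\alpha,x\rangle^2,
\]
a polynomial that is \emph{strictly positive} everywhere except where at least two distinct roots vanish simultaneously, that is, on the codimension-two part of $\partial W$. On a compact interval $[0,T]$ the path $X([0,T])$ is almost surely compact, so $S\circ X$ is bounded above by continuity; for the lower bound I would argue that $X$ almost surely avoids this codimension-two set, so that $X([0,T])$ stays at a positive distance from $\{S=0\}$ and hence $S(X(\cdot))\ge c>0$ on $[0,T]$. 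Together these bounds give $0<c\le s'(t)\le C<\infty$, whence both $s$ and $s^{-1}$ are bi-Lipschitz on $[0,T]$. Finally, I would decompose $[0,\infty)$ into the countably many compact intervals $[n-1,n]$, apply bi-Lipschitz invariance on each piece, and invoke the countable stability of the Hausdorff dimension to conclude
\[
\dim\big(X^{-1}(\partial W)\big)=\dim\big(s^{-1}(Z)\big)=\dim Z=\max\Big\{0,\tfrac12-k\Big\}
\]
almost surely.

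I expect the genuine obstacle to be the lower bound in the bi-Lipschitz step, namely ruling out that $X$ ever touches the intersection of two walls. This is a polarity statement for a codimension-two subset of $\partial W$, and since the singular drift in \eqref{eq:MultiBesselSDE} repels the process from every wall it is plausible; making it rigorous will likely require an occupation-density or capacity estimate in the spirit of the density computation already used in Lemma~\ref{lm:bijection}, now strengthened from ``almost every time'' to ``almost surely never''. The remaining ingredients---the dimension of $Z$ and the stability and monotonicity properties of $\dim$---are then routine.
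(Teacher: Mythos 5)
Your proposal follows the paper's proof of \Cref{th:main_uniform_case} essentially step for step: the identification $X^{-1}(\partial W)=Y^{-1}(0)$ via the squared alternating polynomial, the reduction to a classical squared Bessel process of index $k-\tfrac12$ through \Cref{lm:time_change} and the bijectivity of $s$ from \Cref{lm:bijection}, the Liu--Xiao value $\max\{0,\tfrac12-k\}$ for the zero set (including the trivial polarity regime $k\geq\tfrac12$), and the final passage via bi-Lipschitz invariance on compact intervals together with countable stability. The one place you diverge is the bi-Lipschitz step, and there the divergence is instructive rather than cosmetic.

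You are right that the upper Lipschitz bound for $s$ is free (continuity of $S(X(\cdot))$ on compacts), while the Lipschitz continuity of $s^{-1}$ requires $S(X(t))\geq c>0$ on $[0,T]$, i.e.\ that $X$ avoids $\{S=0\}$, which by your factorization $S(x)=\sum_{\beta\in R_+}\|\beta\|^2\prod_{\alpha\neq\beta}\langle\alpha,x\rangle^2$ is exactly the codimension-two part of the arrangement where two distinct walls meet. You flag this polarity statement as the genuine obstacle and leave it open, so as submitted your proof is incomplete at precisely this point. What you should know is that the paper does not close it either: its proof asserts that $s$, being continuously differentiable, is ``thereby bi-Lipschitz on any compact interval'', which is not a valid implication wherever $s'=S(X)$ vanishes (compare $t\mapsto t^2$ near $0$), and \Cref{lm:bijection} only shows that $\{t:S(X(t))=0\}$ has Lebesgue measure zero---enough for strict monotonicity and bijectivity, not for a Lipschitz inverse. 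So you have located a real soft spot that the paper glosses over, not a defect peculiar to your route. Two remarks toward a repair: first, the missing positivity affects only the upper bound of the dimension identity, since $Z=s\big(Y^{-1}(0)\big)$ and the one-sided Lipschitz bound on $s$ already gives $\dim\big(Z\cap[0,s(n)]\big)\leq\dim\big(Y^{-1}(0)\cap[0,n]\big)$, hence the lower bound $\dim\big(Y^{-1}(0)\big)\geq\max\{0,\tfrac12-k\}$; alternatively, the paper's own lower-bound argument in the non-uniform section (\Cref{lm:non_uniform_lower_bound} together with the covering of $\partial W$ by the walls $\partial W^\beta$, $\beta\in\Delta$) yields this bound with no time change at all. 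Second, the needed avoidance of codimension-two strata is a no-multiple-collision statement; for type $A$ it is known (a result of C\'epa and L\'epingle on the absence of multiple collisions for mutually repelling Brownian particles), but for a general reduced root system it would require an argument of the capacity type you sketch, and a fully rigorous version of either your proof or the paper's needs this input for the upper bound.
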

\begin{proof}
	%We recall the squared alternating polynomial 
	%\begin{align}
	%		V(x)&:=\prod_{\alpha\in R_+}\langle\alpha, x\rangle^2.\notag
	%\end{align}
	The case $k\geq 1/2$ is trivial since then the multivariate Bessel process never hits $\partial W$ almost surely.
	In \Cref{lm:time_change} we examined $Y(t):=V(X(t))$ by using the squared alternating polynomial \eqref{eq:squared_alternating_polynomial}. We can immediately recognize that this step is beneficial by the fact that $X(t)\in \partial W=\{y\in W\cond{}  \exists \alpha\in R_+:\langle \alpha, y\rangle=0\}$ if and only if $Y(t)=0$. In particular, we conclude that $\dim\big( X^{-1}(\partial W)\big)=\dim\big(Y^{-1}(0)\big).$ Accordingly, we focus on showing
	$$ \dim\big(Y^{-1}(0)\big)=\frac12 -k$$
	almost surely. This proof is based on the time change and the well-known Hausdorff dimension of the times when a classical Bessel process hits the origin. Initially, we argue that $s$, the inverse of our time change \eqref{eq:RandomTimeChange}, is bi-Lipschitz on any compact interval $[0,n]$ almost surely. The multivariate Bessel process $X$ is almost surely continuous and $S$ is a polynomial, thus $S(X)$ is almost surely continuous. Consequently, the integral $s$ is continuously differentiable almost surely and thereby bi-Lipschitz on any compact interval almost surely, that is, 
	\begin{align*}
		C_1|x-y|\leq |s(x)-s(y)| \leq C_2|x-y|
	\end{align*}
	for some $C_1(n),C_2(n)> 0$ and every $x,y\in[0,n]$. Applying this formula to $x:=s^{-1}(\tilde x)$ and $y:=s^{-1}(\tilde y)$, we immediately obtain the bi-Lipschitz property for $s^{-1}$ on any compact interval almost surely. 
	
	In \Cref{lm:time_change} we have shown that $Y(s^{-1}(t))$ is a classical squared Bessel process and the result 
	\begin{align}
		\label{eq:HD_for_bessel}
		\dim\Big(\big(Y(s^{-1})\big)^{-1}(0)\Big)=\frac12-k
	\end{align} 
	is known for this process almost surely. In \cite[4. Hausdorff dimension of the zero set]{LiuXiao98} or for a detailed proof \cite[4.2 Existing results on return times of a classical Bessel process]{Hufnagel} this result is shown for a classical Bessel process hitting the origin and in particular it makes no difference looking at the square. In the following, we transfer the dimension result to the set $Y^{-1}(0)$. Since $s^{-1}$ is bi-Lipschitz on any compact interval almost surely, we consider
	\begin{align}
		\begin{split}
			\label{eq:proof_HD_reformulation}
			\dim\Big(\big(Y(s^{-1})\big)^{-1}(0)\cap[0,n]\Big)&=\dim\Big(s\big(Y^{-1}(0)\big)\cap[0,n]\Big)\\
			&=\dim\bigg(s^{-1}\Big(s\left(Y^{-1}(0)\right)\cap[0,n]\Big)\bigg)\\
			&=\dim\Big(s^{-1}\left(s\big(Y^{-1}(0)\big)\right)\cap s^{-1}([0,n])\Big)\\
			&=\dim\Big(Y^{-1}(0)
			\cap s^{-1}([0,n])\Big)
		\end{split}
	\end{align}
	%    \begin{align}
		%\begin{split}
		%\label{eq:proof_HD_reformulation}
		%  \dim\Big(\big(Y(s)\big)^{-1}(0)\cap[0,n]\Big)&=\dim\Big(s^{-1}\big(Y^{-1}(0)\big)\cap[0,n]\Big)\\
		%&=\dim\Big(s\left(s^{-1}\left(Y^{-1}(0)\right)\cap[0,n]\right)\Big)\\
		%&=\dim\Big(s\left(s^{-1}\big(Y^{-1}(0)\big)\right)\cap s([0,n])\Big)\\
		%  &=\dim\Big(Y^{-1}(0)
		%   \cap s([0,n])\Big)
		% \end{split}
	% \end{align}
for arbitrary $n\in \N$ almost surely.
In the second line we applied the invariance of the Hausdorff dimension under the almost sure bi-Lipschitz mapping $s^{-1}$ on $[0,n]$. By using the countable stability property twice and $$\bigcup\limits_{n\in\N}s^{-1}([0,n])=[0,\infty)$$ for the almost sure bijection $s$ on $[0,\infty)$, we conclude
\begin{align*}
	\frac12-k&\stackrel{\eqref{eq:HD_for_bessel}}{=}\dim\Big(\big(Y(s^{-1})\big)^{-1}(0)\Big)
	\\&=\dim\bigg(\bigcup\limits_{n\in\N}\big(Y(s^{-1})\big)^{-1}(0)\cap[0,n]\bigg)\\
	&=\sup\limits_{n\in \N}\dim\Big(\big(Y(s^{-1})\big)^{-1}(0)\cap[0,n]\Big)\\
	&\stackrel{\eqref{eq:proof_HD_reformulation}}{=}\sup\limits_{n\in \N}\dim\Big(Y^{-1}(0)
	\cap s^{-1}([0,n])\Big)\\
	&=\dim\bigg(\bigcup\limits_{n\in\N}Y^{-1}(0)
	\cap s^{-1}([0,n])\bigg)\\
	&=\dim\big(Y^{-1}(0)\big)
\end{align*}
almost surely.
\end{proof}

\section{Non-uniform case}\label{sc:non-uniform}
In this section, we investigate the multivariate Bessel process $\{X_t\}_{t\geq0}$ having a non-uniform multiplicity function $k:R_+\to (0,\infty)$. We consider again the Hausdorff dimension of the times hitting the boundary of the Weyl chamber $X^{-1}(\partial W)$. We divide the proof into two parts, where we trace the problem back to the known result of the classical Bessel process hitting the origin. For the upper bound of the Hausdorff dimension, we utilize the ideas of the uniform case by bounding our multiplicities by the minimum over the multiplicities and then applying the uniform case to $\min\limits_{\alpha\in R_+}k(\alpha)$. 
\begin{theorem}[Upper bound of the Hausdorff dimension]
%Fix $x\in \overline W$. 
The Hausdorff dimension of collision times for the multivariate Bessel process is bounded by
\begin{equation}
	\dim\big( X^{-1}(\partial W)\big)\leq\max\Big\{0,\frac12- \min\limits_{\alpha\in R_+}k(\alpha)\Big\}\notag
\end{equation}
almost surely.
\end{theorem}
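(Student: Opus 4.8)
The plan is to reuse the computation of \Cref{lm:time_change} up to the point where the uniform assumption was first invoked, and to bring in the minimum $\kappa:=\min_{\alpha\in R_+}k(\alpha)$ only through a comparison inequality. First I would note that the SDE \eqref{eq:calculation_of_V(X)} for $Y(t):=V(X(t))$ was derived \emph{before} imposing $k(\alpha)=k$, so it holds verbatim in the non-uniform case; moreover, the construction of the Brownian motion $\{W(t)\}_{t\ge0}$ and the martingale identity \eqref{eq:martingale_part_Y} did not use uniformity either, since the quadratic-variation computation is independent of $k$. Hence in the non-uniform case
\begin{align*}
	\ud Y(t)=2\sqrt{Y(t)}\sqrt{S(X(t))}\intd W(t)+V(X(t))\sum_{\alpha\in R_+}(2k(\alpha)+1)\frac{\|\alpha\|^2}{\langle\alpha,X(t)\rangle^2}\intd t.
\end{align*}
Because $k(\alpha)\ge\kappa$ for every $\alpha\in R_+$ and each summand $V(X(t))\|\alpha\|^2/\langle\alpha,X(t)\rangle^2$ is nonnegative, the drift is bounded below by $(2\kappa+1)S(X(t))$, recalling the definition \eqref{eq:TimeChangeDer} of $S$.

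Next I would introduce an auxiliary one-dimensional \emph{minorant} process $\{Y_{\min}(t)\}_{t\ge0}$, defined as the strong solution of the uniform SDE with multiplicity $\kappa$ driven by the \emph{same} Brownian motion and the \emph{same} coefficient $S(X(t))$,
\begin{align*}
	\ud Y_{\min}(t)=2\sqrt{Y_{\min}(t)}\sqrt{S(X(t))}\intd W(t)+(2\kappa+1)S(X(t))\intd t,\qquad Y_{\min}(0)=Y(0).
\end{align*}
This is exactly \eqref{eq:new_SDE_for_V(X)} with $k$ replaced by $\kappa$. Since the time change \eqref{eq:RandomTimeChange} depends only on $X$ through $S(X)$ and not on $Y$, the argument of \Cref{lm:time_change} applies unchanged to show that $Y_{\min}(s^{-1}(\cdot))$ is a classical squared Bessel process of index $\kappa-1/2$, with $s$ the same time change as before. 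Consequently the bi-Lipschitz transfer in the proof of \Cref{th:main_uniform_case} applies word for word to $Y_{\min}$ and gives $\dim\big(Y_{\min}^{-1}(0)\big)=\max\{0,\tfrac12-\kappa\}$ almost surely.

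The crux is a pathwise comparison: $Y$ and $Y_{\min}$ are one-dimensional It\^o processes driven by the same $W$, they share the diffusion coefficient $y\mapsto 2\sqrt{y}\sqrt{S(X(t))}$, start from the same point, and the drift of $Y$ dominates that of $Y_{\min}$; I therefore expect $Y(t)\ge Y_{\min}(t)$ for all $t$ almost surely. Since both processes are nonnegative, this yields the inclusion $Y^{-1}(0)\subseteq Y_{\min}^{-1}(0)$, and monotonicity of the Hausdorff dimension together with the identity $X^{-1}(\partial W)=Y^{-1}(0)$ gives $\dim\big(X^{-1}(\partial W)\big)=\dim\big(Y^{-1}(0)\big)\le\dim\big(Y_{\min}^{-1}(0)\big)=\max\{0,\tfrac12-\kappa\}$, which is the claim.

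The main obstacle is the justification of this comparison: the shared diffusion coefficient is only H\"older-$1/2$ in the state variable (the square root fails to be Lipschitz precisely at the origin, which is the point of interest) and carries the random, time-dependent factor $\sqrt{S(X(t))}$, which can itself vanish. The classical comparison theorems (Yamada's condition $\int_{0+}\rho^{-2}=\infty$ with $\rho(u)=2\sqrt{u}$, as in Ikeda--Watanabe or Revuz--Yor) cover the H\"older-$1/2$ diffusion, but the extra factor requires care. The cleanest route, which I would adopt, is to perform the time change $s^{-1}$ \emph{first}: both $\widetilde Y:=Y\circ s^{-1}$ and $\widetilde Y_{\min}:=Y_{\min}\circ s^{-1}$ then solve autonomous squared-Bessel SDEs with the $k$-independent coefficient $2\sqrt{\cdot}$ and respective drifts $\theta(t)\ge 2\kappa+1$ and $2\kappa+1$ against a common Brownian motion, so the standard comparison theorem applies directly and yields $\widetilde Y\ge\widetilde Y_{\min}$; as $s$ is an almost sure bijection by \Cref{lm:bijection}, this transfers back to $Y\ge Y_{\min}$ on all of $[0,\infty)$ and closes the argument.
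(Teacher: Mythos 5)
Your proposal is correct and takes essentially the same route as the paper: bound the drift of $Y=V(X)$ below using $k(\alpha)\ge\min_{\alpha\in R_+}k(\alpha)$ in \eqref{eq:calculation_of_V(X)}, apply the same time change $s^{-1}$, invoke the comparison theorem \cite[Exercise 2.19, p.~294]{karatzas2014brownian} at the time-changed level against a classical squared Bessel process of index $\min_{\alpha\in R_+}k(\alpha)-1/2$, and conclude via monotonicity of the Hausdorff dimension and the bi-Lipschitz transfer from \Cref{th:main_uniform_case}. Your intermediate minorant $Y_{\min}$ in the original time scale is a cosmetic detour the paper omits (it defines the comparison process $Z$ only after the time change), and your ``time-change first, then compare'' resolution of the H\"older-$1/2$/random-coefficient issue is precisely the paper's argument.
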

\begin{remark}
Note that the minimum in this theorem can also be taken over $R$ because $k$ is $G$-invariant, namely, $k(\alpha)=k(\sigma_\alpha\alpha)=k(-\alpha).$ The same can be said of all other such statements below.
\end{remark}
\begin{proof}
We focus on the non-trivial case where $\min_{\alpha\in R_+}k(\alpha)<1/2$. Since we want to employ the ideas from the uniform case, we recall the process $Y(t):=V(X(t))$ from \Cref{lm:time_change} and the  calculations in the proof. In particular, instead of the uniformness of the multiplicity function we now apply $k(\beta)\geq \min\limits_{\alpha\in R_+}k(\alpha)$ in \eqref{eq:calculation_of_V(X)} resulting in a formula similar to \eqref{eq:new_SDE_for_V(X)}:
\begin{align*}
	\ud Y(t)&\geq 2\sqrt{Y(t)}\sqrt{S(X(t))}\intd W(t)+\big(2\min\limits_{\alpha\in R_+}k(\alpha)+1\big)S(X(t))\intd t
\end{align*}
with a Brownian motion $\{W(t)\}_{t\geq 0}$, as specified in the proof of \Cref{lm:time_change}. 
Considering the same random time change $s^{-1}$ and defining the process $\{\widetilde{Y}(t)\}_{t\geq0}$ by
\[\widetilde{Y}(t):=Y(s^{-1}(t)),\]
it follows by identical arguments that $\widetilde{Y}(t)$ satisfies the inequality
\begin{align}
	\label{eq:inequality_proof_nonuniform_upper_bound}
	\ud \widetilde{Y}(t)\geq2\sqrt{\widetilde{Y}(t)}\intd \widetilde W(t)+\big(2\min\limits_{\alpha\in R_+}k(\alpha)+1\big)\intd t,
\end{align}
where $\{\widetilde W(t)\}_{t\geq 0}$ is a Brownian motion. The rhs is, again, a classical squared Bessel process of dimension $2\min\limits_{\alpha\in R_+}k(\alpha)+1$, or index $\nu=\min\limits_{\alpha\in R_+}k(\alpha)-1/2$.  

With these preliminary considerations, we can prove the upper bound of the Hausdorff dimension. Let us denote the process defined through the SDE on the rhs of \eqref{eq:inequality_proof_nonuniform_upper_bound} by $\{Z(t)\}_{t\geq 0}$ with $\widetilde Y(0)=Z(0)$, namely
\begin{align*}
	\ud Z(t)=2\sqrt{Z(t)}\intd \widetilde{W}(t) +\big(2\min\limits_{\alpha\in R_+}k(\alpha)+1\big)\intd t.
\end{align*} 
The result  $$\dim\big(Z^{-1}(0)\big)=\frac12-\min\limits_{\alpha\in R_+}k(\alpha)$$ 
is well-known for a classical squared Bessel process almost surely. From \eqref{eq:inequality_proof_nonuniform_upper_bound} and \cite[Exercise 2.19, p.~294]{karatzas2014brownian}, we have $\widetilde Y(t)\geq Z(t)$ for all $t\geq 0$ almost surely, and conclude by the monotonicity of the Hausdorff dimension combined with $Z(t)\geq 0$ that 
$$\dim\big(\widetilde Y^{-1}(0)\big)\leq\dim\big(Z^{-1}(0)\big)=\frac12-\min\limits_{\alpha\in R_+}k(\alpha)$$ 
almost surely. Using the same arguments from the proof of \Cref{th:main_uniform_case}, that is, $Y(t)=0$ if and only if $X(t)\in \partial W$ and the invariance of the Hausdorff dimension under an almost-sure bi-Lipschitz mapping, we obtain 
$$\dim\big(X^{-1}(\partial W)\big)=\dim\big( Y^{-1}(0)\big)=\dim\big(\widetilde Y^{-1}(0)\big)\leq \frac12-\min\limits_{\alpha\in R_+}k(\alpha)$$
almost surely.
\end{proof}
For the lower bound, we focus on one specific root and the multivariate Bessel process hitting the boundary associated to this root. In formulas, for $\beta$ in the simple system $\Delta\subset R_+$ we can look at the process $\langle X(t), \beta \rangle\geq0$. Following the ideas of \cite{Demni09}, we prove that a normalized version of this process is a classical Bessel process with an additional, negative drift term, and hence is bounded above by a classical Bessel process.
%Following \cite{Demni09}, for $\beta$ in the simple system $\Delta\subset R_+$ we can look at the process $\langle X(t), \beta \rangle\geq0$, and see that it is close to a Bessel process, with an additional drift term. 
\begin{lemma}
\label{lm:non_uniform_lower_bound}
The process $\langle X(t), \beta \rangle/\|\beta\|$ is bounded above by a classical Bessel process of index $k(\beta)-1/2$.
\end{lemma}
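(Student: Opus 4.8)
The plan is to compute the SDE of $Z(t):=\langle X(t),\beta\rangle/\|\beta\|$ directly from \eqref{eq:MultiBesselSDE}, isolate from its drift the single term that reproduces a Bessel drift, and show that everything left over is a non-positive drift; a comparison theorem then finishes the job. First I would take the inner product of \eqref{eq:MultiBesselSDE} with $\beta/\|\beta\|$. The martingale part is $\langle\beta,\ud B(t)\rangle/\|\beta\|$, which has quadratic variation $\ud t$ and is therefore a standard one-dimensional Brownian motion $\widetilde B(t)$. Separating the $\alpha=\beta$ summand from the drift and using $\langle\beta,\beta\rangle=\|\beta\|^2$ gives $k(\beta)\|\beta\|/\langle\beta,X(t)\rangle=k(\beta)/Z(t)$, which is exactly the drift of a classical Bessel process of index $k(\beta)-1/2$ (compare the rank-one SDE of Section~\ref{sc:intro}). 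I would thus arrive at
\begin{align*}
	\ud Z(t)=\ud\widetilde B(t)+\frac{k(\beta)}{Z(t)}\intd t+\frac1{\|\beta\|}\sum_{\substack{\alpha\in R_+\\\alpha\neq\beta}}k(\alpha)\frac{\langle\alpha,\beta\rangle}{\langle\alpha,X(t)\rangle}\intd t,
\end{align*}
and the whole question reduces to controlling the sign of the last sum.

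The key step, which I expect to be the main obstacle, is to prove that this residual drift is almost surely non-positive along the entire trajectory. Here I would use that $\beta\in\Delta$ is \emph{simple}: the reflection $\sigma_\beta$ then permutes $R_+\setminus\{\beta\}$ \cite{Humphreys}, so it acts as an involution on the index set of the sum, with fixed points exactly those $\alpha$ for which $\langle\alpha,\beta\rangle=0$. Grouping the sum into the orbits of $\sigma_\beta$, the fixed points contribute nothing, while a genuine two-cycle $\{\alpha,\sigma_\beta\alpha\}$ contributes, using the $G$-invariance $k(\sigma_\beta\alpha)=k(\alpha)$ and $\langle\sigma_\beta\alpha,\beta\rangle=-\langle\alpha,\beta\rangle$,
\begin{align*}
	k(\alpha)\langle\alpha,\beta\rangle\left(\frac1{\langle\alpha,X(t)\rangle}-\frac1{\langle\sigma_\beta\alpha,X(t)\rangle}\right)=-\frac{2\,k(\alpha)\,\langle\alpha,\beta\rangle^2\,\langle\beta,X(t)\rangle}{\|\beta\|^2\,\langle\alpha,X(t)\rangle\,\langle\sigma_\beta\alpha,X(t)\rangle},
\end{align*}
where I have substituted $\langle\sigma_\beta\alpha,X(t)\rangle-\langle\alpha,X(t)\rangle=-2\langle\alpha,\beta\rangle\langle\beta,X(t)\rangle/\|\beta\|^2$. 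Since $X(t)\in W$ and $\alpha,\sigma_\beta\alpha,\beta\in R_+$, each of the inner products $\langle\alpha,X(t)\rangle,\langle\sigma_\beta\alpha,X(t)\rangle,\langle\beta,X(t)\rangle$ is non-negative, so every paired contribution is $\leq0$ and the residual drift is non-positive. Here the simplicity of $\beta$ is the only structural input, and it is exactly the hypothesis that makes the pairing work.

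Finally I would conclude by comparison. Writing the residual drift as $g(t)\leq0$, I have $\ud Z(t)\leq\ud\widetilde B(t)+k(\beta)Z(t)^{-1}\intd t$. To run the comparison cleanly through the singular drift — note that $k(\beta)-1/2<0$ is precisely the interesting regime, in which the dominating Bessel process reaches $0$ — I would pass to squares, exactly as in the upper-bound proof: It\^o's formula gives
\begin{align*}
	\ud\big(Z(t)^2\big)=2\sqrt{Z(t)^2}\,\ud\widetilde B(t)+(2k(\beta)+1)\intd t+2Z(t)g(t)\intd t,
\end{align*}
with $2Z(t)g(t)\leq0$. This has the same diffusion coefficient $2\sqrt{\cdot}$ and a drift bounded above by the constant $(2k(\beta)+1)$, which is the (Lipschitz) drift of a squared Bessel process of dimension $2k(\beta)+1$. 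Comparing with the classical Bessel process $\zeta$ of index $k(\beta)-1/2$, so that $\zeta^2$ is a squared Bessel process of dimension $2k(\beta)+1$ driven by $\widetilde B$ with $\zeta(0)=Z(0)$, via \cite[Exercise 2.19, p.~294]{karatzas2014brownian} yields $Z(t)^2\leq\zeta(t)^2$, hence $\langle X(t),\beta\rangle/\|\beta\|=Z(t)\leq\zeta(t)$ for all $t\geq0$ almost surely, which is the assertion. The only remaining care is that all the SDE manipulations hold for Lebesgue-almost every $t$, the excluded set of times with $X(t)\in\partial W$ being null by \Cref{lm:bijection}.
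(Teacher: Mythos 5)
Your proposal is correct, and its core is the same as the paper's: project \eqref{eq:MultiBesselSDE} onto $\beta/\|\beta\|$, extract the Bessel drift $k(\beta)/Y_\beta(t)$ from the $\alpha=\beta$ term, and kill the sign of the residual drift by pairing each remaining positive root $\alpha$ with $\sigma_\beta\alpha$, using $G$-invariance of $k$, $\langle\sigma_\beta\alpha,\beta\rangle=-\langle\alpha,\beta\rangle$, and $\alpha-\sigma_\beta\alpha=2\langle\alpha,\beta\rangle\beta/\|\beta\|^2$ — your paired contribution is term-for-term the paper's. Two differences are worth noting. First, where you invoke the standard fact that a simple reflection permutes $R_+\setminus\{\beta\}$ by citing \cite{Humphreys} (and handle the $\langle\alpha,\beta\rangle=0$ fixed points, which contribute zero), the paper re-proves this fact from scratch via the simple-root expansion \eqref{eq:simple_roots}; the content is identical, and your orbit decomposition is slightly tidier than the paper's split of the sum into the $\langle\alpha,\beta\rangle>0$ and $\langle\alpha,\beta\rangle<0$ parts followed by re-indexing. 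Second, and more substantively, the paper applies the comparison theorem \cite[Exercise 2.19, p.~294]{karatzas2014brownian} directly to $Y_\beta$ versus the Bessel SDE $\ud Z_\beta=\ud B_\beta+k(\beta)Z_\beta^{-1}\intd t$, whose drift is neither Lipschitz nor even locally bounded near $0$ — precisely the regime $k(\beta)<1/2$ of interest, where $Z_\beta$ does reach the origin; your detour through the squared processes, where the dominating drift is the constant $2k(\beta)+1$ and the diffusion coefficient $2\sqrt{x}$ satisfies the H\"older hypothesis of the comparison theorem, is a cleaner and more defensible application of the cited result (and mirrors what the paper itself does in the upper-bound proof, where the comparison is likewise run at the level of squared Bessel processes). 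Since $Y_\beta(t)\geq0$ on $W$, passing from $Y_\beta^2\leq\zeta^2$ back to $Y_\beta\leq\zeta$ is immediate, so your argument delivers exactly the lemma's conclusion.
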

\begin{proof}
We define 
\begin{align}
	Y_\beta(t):=\frac{\langle X(t), \beta \rangle}{\|\beta\|}\notag
\end{align}
and compute the corresponding SDE using the It{\^o} formula:
\begin{align}
	\ud Y_\beta(t)&\stackrel{\eqref{eq:MultiBesselSDE}}{=}\frac{\langle \ud B(t), \beta \rangle}{\|\beta\|}+\sum_{\alpha\in R_+}\frac{k(\alpha)}{\|\beta\|}\frac{\langle\beta,\alpha\rangle}{\langle\alpha,X(t)\rangle}\ud t\notag\\
	&=\frac{\langle \ud B(t), \beta \rangle}{\|\beta\|}+\frac{k(\beta)}{Y_\beta(t)}\intd t+\sum_{\substack{\alpha\in R_+\backslash\{\beta\}}}\frac{k(\alpha)}{\|\beta\|}\frac{\langle\beta,\alpha\rangle}{\langle\alpha,X(t)\rangle}\ud t\notag\\
	&=\frac{\langle \ud B(t), \beta \rangle}{\|\beta\|}+\frac{k(\beta)}{Y_\beta(t)}\intd t+\sum_{\substack{\alpha\in R_+\backslash\{\beta\}\\\langle\alpha,\beta\rangle>0}}\frac{k(\alpha)}{\|\beta\|}\frac{\langle\beta,\alpha\rangle}{\langle\alpha,X(t)\rangle}\ud t+\sum_{\substack{\alpha\in R_+\backslash\{\beta\}\\\langle\alpha,\beta\rangle<0}}\frac{k(\alpha)}{\|\beta\|}\frac{\langle\beta,\alpha\rangle}{\langle\alpha,X(t)\rangle}\ud t\notag\\
	&=\frac{\langle \ud B(t), \beta \rangle}{\|\beta\|}+\frac{k(\beta)}{Y_\beta(t)}\intd t+\sum_{\substack{\alpha\in R_+\backslash\{\beta\}\\\langle\alpha,\beta\rangle>0}}\frac{k(\alpha)}{\|\beta\|}\frac{\langle\beta,\alpha\rangle}{\langle\alpha,X(t)\rangle}\ud t-\sum_{\substack{\alpha\in R_+\backslash\{\beta\}\\\langle\alpha,\beta\rangle<0}}\frac{k(\alpha)}{\|\beta\|}\frac{\langle\beta,\sigma_\beta\alpha\rangle}{\langle\alpha,X(t)\rangle}\intd t.\label{eq:simpleRootSDE}
\end{align}
The short calculation $\inprod{\beta}{\sigma_\beta\alpha}=-\inprod{\beta}{\alpha}$ justifies the last equality. The first two terms result in a classical Bessel process, so now we  simplify the two remaining sums so that we recognize that the additional drift term is negative. Once we have shown this, the statement follows immediately. 
We want to modify the second sum to achieve the condition $\inprod{\beta}{\alpha}>0$ and combine it with the first sum. In order to do this, we prove that, if $\alpha\in R_+\backslash\{\beta\}$ with $\inprod{\beta}{\alpha}<0$, then $\sigma_\beta\alpha\in R_+$ with $\inprod{\beta}{\sigma_\beta \alpha}>0$.
Since $\alpha\in R_+$, there exist constants $c_\gamma\geq0$ for every $\gamma\in\Delta$, see \eqref{eq:simple_roots}, such that
\begin{align}
	\label{eq:proof_alpha}
	\alpha=\sum_{\gamma\in\Delta}c_\gamma\gamma.
\end{align}
Then, we compute $\sigma_\beta\alpha$ as follows:
%\footnote{\nicole{Do we need this argument? I do not see that we actually need to rewrite $\inprod{\gamma}{\beta}=-|\inprod{\gamma}{\beta}|$. The argument should work without this part.}} %shown in \cite[Corollary, p.9]{Humphreys}
\begin{align*}
	\sigma_\beta\alpha&=\sum_{\gamma\in\Delta}c_\gamma\gamma-2\sum_{\gamma\in\Delta}\frac{\langle c_\gamma\gamma,\beta\rangle}{\langle\beta,\beta\rangle}\beta\\
	%&=\sum_{\gamma\in\Delta\backslash\{\beta\}}c_\gamma\gamma-\frac{2}{\langle\beta,\beta\rangle}\sum_{\gamma\in\Delta\backslash\{\beta\}}c_\gamma\langle\gamma,\beta\rangle\beta-c_\beta\beta\\
	&=\sum_{\gamma\in\Delta\backslash\{\beta\}}c_\gamma\gamma-\left[c_\beta+\frac{2}{\langle\beta,\beta\rangle}\sum_{\gamma\in\Delta\backslash\{\beta\}}c_\gamma\langle\gamma,\beta\rangle\right]\beta.
\end{align*}
Because $\sigma_\beta\alpha$ is a root, it must be an element of either $R_+$ or $-R_+$, so all coefficients on the rhs are either non-negative or non-positive. Let us take a look at \eqref{eq:proof_alpha}: Since $\alpha\neq\beta$, at least one of the coefficients $c_\gamma$ must be positive for $\gamma\neq \beta$. Therefore, $\sigma_\beta\alpha\in R_+$.  With this in mind, we can rewrite the last term in \eqref{eq:simpleRootSDE} by replacing the dummy variable $\alpha$ by $\sigma_\beta\alpha$ and applying $\sigma_\beta\big(\sigma_\beta \alpha\big)=\alpha$:
\begin{align*}
	\sum_{\substack{\alpha\in R_+\backslash\{\beta\}\\\langle\alpha,\beta\rangle<0}}\frac{k(\alpha)}{\|\beta\|}\frac{\langle\beta,\sigma_\beta\alpha\rangle}{\langle\alpha,X(t)\rangle}\intd t&=\sum_{\substack{\sigma_\beta\alpha\in R_+\\\langle\sigma_\beta\alpha,\beta\rangle<0}}\frac{k(\sigma_\beta\alpha)}{\|\beta\|}\frac{\langle\beta,\alpha\rangle}{\langle\sigma_\beta\alpha,X(t)\rangle}\intd t\\
	&=\sum_{\substack{\alpha\in R_+\backslash\{\beta\}\\\langle\alpha,\beta\rangle>0}}\frac{k(\alpha)}{\|\beta\|}\frac{\langle\beta,\alpha\rangle}{\langle\sigma_\beta\alpha,X(t)\rangle}\intd t.
\end{align*}
Since   $\alpha=\beta$ implies $\sigma_\beta\alpha=-\beta\notin R_+$, the rhs on the first line has a sum with the condition $\sigma_\beta\alpha\in R_+$ instead of $\sigma_\beta\alpha\in R_+\backslash\{\beta\}$.
The second line follows from the $G$-invariance of the multiplicity function, that is, $k(\sigma_\beta\alpha)=k(\alpha)$ for every $\alpha,\beta\in R_+$, and the equality $\langle\beta,\sigma_\beta\alpha\rangle=\langle\sigma_\beta\beta,\alpha\rangle=-\langle\beta,\alpha\rangle$. Inserting this expression into \eqref{eq:simpleRootSDE}, we obtain
\begin{align}
	\ud Y_\beta(t)&=\frac{\langle \ud B(t), \beta \rangle}{\|\beta\|}+\frac{k(\beta)}{Y_\beta(t)}\intd t+\sum_{\substack{\alpha\in R_+\backslash\{\beta\}\\\langle\alpha,\beta\rangle>0}}\frac{k(\alpha)}{\|\beta\|}\bigg[\frac{\langle\beta,\alpha\rangle}{\langle\alpha,X(t)\rangle}-\frac{\langle\beta,\alpha\rangle}{\langle\sigma_\beta\alpha,X(t)\rangle}\bigg]\intd t\notag\\
	&=\frac{\langle \ud B(t), \beta \rangle}{\|\beta\|}+\frac{k(\beta)}{Y_\beta(t)}\intd t-\sum_{\substack{\alpha\in R_+\backslash\{\beta\}\\\langle\alpha,\beta\rangle>0}}\frac{k(\alpha)}{\|\beta\|}\langle\beta,\alpha\rangle\frac{\langle\alpha-\sigma_\beta\alpha,X(t)\rangle}{\langle\alpha,X(t)\rangle\langle\sigma_\beta\alpha,X(t)\rangle}\intd t\notag\\
	&=\frac{\langle \ud B(t), \beta \rangle}{\|\beta\|}+\frac{k(\beta)}{Y_\beta(t)}\intd t-\sum_{\substack{\alpha\in R_+\backslash\{\beta\}\\\langle\alpha,\beta\rangle>0}}2\frac{k(\alpha)}{\|\beta\|}\frac{\langle\beta,\alpha\rangle^2}{\langle\beta,\beta\rangle}\frac{\langle\beta,X(t)\rangle}{\langle\alpha,X(t)\rangle\langle\sigma_\beta\alpha,X(t)\rangle}\intd t\notag\\
	&=\frac{\langle \ud B(t), \beta \rangle}{\|\beta\|}+\frac{k(\beta)}{Y_\beta(t)}\intd t-Y_\beta(t)\sum_{\substack{\alpha\in R_+\backslash\{\beta\}\\\langle\alpha,\beta\rangle>0}}\frac{2k(\alpha)\langle\beta,\alpha\rangle^2}{\|\beta\|^2\langle\alpha,X(t)\rangle\langle\sigma_\beta\alpha,X(t)\rangle}\intd t\notag.
\end{align}
All terms in the sum are clearly positive, meaning that the last term in this expression is negative. Rewriting slightly, we obtain the SDE
\begin{align}
	\ud Y_\beta(t)&=\ud B_\beta(t)+\frac{k(\beta)}{Y_\beta(t)}\intd t-Y_\beta(t)\sum_{\substack{\alpha\in R_+\backslash\{\beta\}\\\langle\alpha,\beta\rangle>0}}\frac{2k(\alpha)\langle\beta,\alpha\rangle^2}{\|\beta\|^2\langle\alpha,X(t)\rangle\langle\sigma_\beta\alpha,X(t)\rangle}\intd t\notag,
\end{align}
with $B_\beta(t):=\langle B(t),\beta\rangle/\|\beta\|$ denoting a Brownian motion. This reveals that $Y_\beta(t)$ can be understood as a classical Bessel process of index $k(\beta)-1/2$ with an additional  drift term that pushes it towards the origin. With this in mind, we define the Bessel process $Z_\beta(t)$ by the SDE
\begin{align}
	\ud Z_\beta(t)=\ud B_\beta(t)+\frac{k(\beta)}{Z_\beta(t)}\intd t,\quad Z_\beta(0)=Y_\beta(0).\notag
\end{align}
With these considerations we immediately conclude $Y_\beta(t)\leq Z_\beta(t)$ almost surely \cite[Exercise 2.19, p.~294]{karatzas2014brownian}. 

%Because
%\[k(\beta)\frac{\|\beta\|}{\langle x,\beta\rangle}-\frac{\langle x,\beta\rangle}{\|\beta\|}\sum_{\substack{\alpha\in R_+\backslash\{\beta\}\\\langle\alpha,\beta\rangle>0}}\frac{2k(\alpha)\langle\beta,\alpha\rangle^2}{\|\beta\|^2\langle\alpha,x\rangle\langle\sigma_\beta\alpha,x\rangle}<k(\beta)\frac{\|\beta\|}{\langle x,\beta\rangle},\qquad x\in W\]
%it follows that $Y_\beta(t)\leq Z_\beta(t)$ almost surely by \cite[Exercise 2.19, p.~294]{karatzas2014brownian} and the claim is proved. 
\end{proof}
\begin{theorem}[Lower bound of the Hausdorff dimension]
%Fix $x\in \overline W$. 
The Hausdorff dimension of collision times for the multivariate Bessel process is bounded by
\begin{equation}
	\dim\big( X^{-1}(\partial W)\big)\geq\max\Big\{0,\frac12- \min\limits_{\alpha\in R_+}k(\alpha)\Big\}\notag
\end{equation}
almost surely.
\end{theorem}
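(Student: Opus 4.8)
The plan is to reduce the lower bound to the behaviour of the single-root process of \Cref{lm:non_uniform_lower_bound}, evaluated at a simple root that carries the smallest multiplicity. The first and most delicate step is to verify that the minimum $\min_{\alpha\in R_+}k(\alpha)$ is in fact attained on the simple system $\Delta$. This rests on the standard root-system fact that every root is conjugate under the reflection group $G$ to a simple root \cite{Humphreys}; combining this with the $G$-invariance of $k$ shows that $\{k(\alpha):\alpha\in R_+\}=\{k(\gamma):\gamma\in\Delta\}$ as sets of values, so there is a simple root $\beta\in\Delta$ with $k(\beta)=\min_{\alpha\in R_+}k(\alpha)$. As in the upper bound, I would restrict attention to the nontrivial regime $\min_{\alpha\in R_+}k(\alpha)<1/2$, since otherwise the claimed bound is the trivial $\dim\geq0$.

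Fixing this $\beta$, I would invoke \Cref{lm:non_uniform_lower_bound} to obtain a classical Bessel process $Z_\beta$ of index $k(\beta)-1/2$ with $0\leq Y_\beta(t)\leq Z_\beta(t)$ for all $t\geq0$ almost surely, where $Y_\beta(t)=\langle X(t),\beta\rangle/\|\beta\|$. The comparison is in the same (real) time parameter, so no time change or bi-Lipschitz argument is needed here, in contrast to the uniform case. Since both processes are nonnegative, $Z_\beta(t)=0$ forces $Y_\beta(t)=0$, which gives the inclusion of zero sets $Z_\beta^{-1}(0)\subseteq Y_\beta^{-1}(0)$.

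Next I would assemble the dimensions. The Hausdorff dimension of the zero set of a classical Bessel process of index $k(\beta)-1/2$ is the known quantity $\max\{0,\tfrac12-k(\beta)\}$ used already for the upper bound, which here equals $\tfrac12-\min_{\alpha\in R_+}k(\alpha)$; if that result is only available on compact intervals, countable stability together with $\bigcup_{n\in\N}[0,n]=[0,\infty)$ promotes it to $[0,\infty)$, exactly as in the proof of \Cref{th:main_uniform_case}. Monotonicity of the Hausdorff dimension applied to $Z_\beta^{-1}(0)\subseteq Y_\beta^{-1}(0)$ then yields $\dim(Y_\beta^{-1}(0))\geq\tfrac12-\min_{\alpha\in R_+}k(\alpha)$. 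Finally, because $Y_\beta^{-1}(0)=X^{-1}(\partial W^\beta)$ and $\partial W^\beta\subseteq\partial W$ by \eqref{eq:boundary_specific_root}, a further application of monotonicity gives
\[\dim\big(X^{-1}(\partial W)\big)\geq\dim\big(X^{-1}(\partial W^\beta)\big)=\dim\big(Y_\beta^{-1}(0)\big)\geq\frac12-\min_{\alpha\in R_+}k(\alpha),\]
which, together with the trivial bound $\dim\geq0$, is precisely the asserted inequality.

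The main obstacle is the very first step. Without knowing that the minimal multiplicity is realized by a \emph{simple} root, \Cref{lm:non_uniform_lower_bound}, which is stated only for $\beta\in\Delta$, could not be applied at the correct index: a generic minimizing root in $R_+$ need not be simple, and the comparison would then be forced to use a larger index, yielding a strictly weaker bound. Everything after the choice of $\beta$ is a routine chain of monotonicity (and, where needed, countable-stability) arguments that mirrors the bookkeeping already carried out in the uniform case.
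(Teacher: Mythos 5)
Your proof is correct, and it rests on the same two pillars as the paper's argument: \Cref{lm:non_uniform_lower_bound} together with the known Hausdorff dimension of the zero set of a classical Bessel process of index $k(\beta)-1/2$, and the fact that every positive root is $G$-conjugate to a simple root combined with the $G$-invariance of $k$. Where you genuinely differ is in the reduction step. The paper first proves the covering identity $\partial W=\bigcup_{\beta\in\Delta}\partial W^\beta$ of \eqref{eq:lowerbound_simpleroots} (via the nonnegative expansion of positive roots in simple roots), applies countable stability to get the equality $\dim\big(X^{-1}(\partial W)\big)=\max_{\beta\in\Delta}\dim\big(X^{-1}(\partial W^\beta)\big)$, bounds \emph{every} wall's contribution from below, and only at the very end invokes conjugacy to convert $\min_{\beta\in\Delta}k(\beta)$ into $\min_{\alpha\in R_+}k(\alpha)$. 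You instead move the conjugacy argument to the front, select a single simple root $\beta\in\Delta$ realizing the minimal multiplicity, and use monotonicity with the trivial inclusion $\partial W^\beta\subseteq\partial W$; this dispenses with the covering identity and countable stability entirely, since a lower bound needs only one wall. Your route is leaner; the paper's buys the mildly stronger wall-by-wall statement, which localizes the collision set on each simple-root hyperplane but is not needed for the theorem as stated. Your emphasis that the minimizing root must be found in $\Delta$ for \Cref{lm:non_uniform_lower_bound} to apply at the correct index is exactly the right point of care, and your handling of the trivial regime $\min_{\alpha\in R_+}k(\alpha)\geq\frac12$ and of the identification $Y_\beta^{-1}(0)=X^{-1}(\partial W^\beta)$ (valid because $X(t)\in W$ for all $t\geq0$) are both sound.
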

\begin{proof}
First of all, we focus on one specific root and examine the boundary associated to this root. When applying the previous lemma, we must argue that we can cover the entire boundary with hyperplanes defined by simple roots. Hence, we establish the following:
\begin{align}
	\label{eq:lowerbound_simpleroots}
	\partial W%&= \{x\in W\cond{}\exists\alpha\in R_+:\langle\alpha,x\rangle=0 \}\\
	&=\bigcup_{\alpha\in R_+} \partial W^\alpha\
	=\bigcup_{\beta\in \Delta}\partial W^\beta.
\end{align}
Recall that $\partial W^\beta$ is defined in \eqref{eq:boundary_specific_root}. The first equation is trivial. The relation $\supseteq$ in the second equation is true since $R_+
\supseteq \Delta$. The relation $\subseteq$ is valid because the simple system $\Delta$ forms a vector basis for $R_+$ with positive coefficients, see \eqref{eq:simple_roots}. In particular, if $\alpha\in R_+$ then there exist coefficients $c_\beta\geq 0$ for  every $\beta\in\Delta$ such that 
\begin{align}
	\label{eq:alpha_representation}\alpha=\sum\limits_{\beta\in\Delta}c_\beta \beta.
\end{align}
If now $x\in \partial W^\alpha$, that is, $\langle \alpha,x\rangle=0$, then 
\begin{align*}
	0=\langle \alpha, x \rangle =\sum\limits_{\beta\in\Delta}\underbrace{c_\beta}_{\geq0} \underbrace{\langle\beta,x\rangle}_{\geq 0}.
\end{align*}
Hence, $c_\beta\langle\beta,x\rangle=0$ for every $\beta\in\Delta$. If $ x$ is not the null vector (which is the trivial case), there exists at least one $\beta\in\Delta$ such that $c_\beta\neq0$ in \eqref{eq:alpha_representation}. Thus, $x\in \partial W^\beta=\{y\in W\cond{}  \langle \beta, y\rangle=0\}$ and the equality is established. Using the countable stability property of Hausdorff dimensions on the finite set $\Delta$, we conclude 
\begin{align*}
	\dim\big( X^{-1}(\partial W)\big)\stackrel{\eqref{eq:lowerbound_simpleroots}}{=}\dim\bigg(\bigcup_{\beta\in\Delta} X^{-1}(\partial W^\beta)\bigg)=\max\limits_{\beta\in\Delta}\Big\{\dim\big( X^{-1}(\partial W^\beta)\big)\Big\}.
\end{align*}
In \Cref{lm:non_uniform_lower_bound} we proved that $$\frac{\langle X(t),\beta\rangle}{\|\beta\|}\geq 0$$ is bounded above by a Bessel process $Z_\beta$ of index $k(\beta)-1/2$. Therefore, if $Z_\beta(t)=0$, then $\langle X(t),\beta\rangle=0$, i.e. $X^{-1}(\partial W^\beta)\supseteq Z_\beta^{-1}(0)$. Using the monotonicity of the Hausdorff dimension we receive 
\begin{align*}
	\dim\big( X^{-1}(\partial W)\big)=\max\limits_{\beta\in\Delta}\Big\{\dim\big( X^{-1}(\partial W^\beta)\big)\Big\}&\geq \max\limits_{\beta\in\Delta}\Big\{\dim\big( Z_\beta^{-1}(0)\big)\Big\}\\
	&=\max\limits_{\beta\in\Delta}\Big\{\max\Big\{0,\frac12-k(\beta)\Big\}\Big\}\\
	&=\max\Big\{0,\frac12-\min\limits_{\beta\in\Delta} k(\beta)\Big\}\\
	&=\max\Big\{0,\frac12-\min\limits_{\alpha\in R_+} k(\alpha)\Big\}
\end{align*}
almost surely. We again used the well-known Hausdorff dimension of the times when a classical Bessel process hits the origin. %, see \cite[4. Hausdorff dimension of the zero set]{LiuXiao98} or for a detailed proof \cite[4.2 Existing results on return times of a classical Bessel process]{Hufnagel}. 
The last line follows from \cite[Corollary, p. 11]{Humphreys}, which implies that for every root $\alpha\in R_+$ there exist $\rho\in G$ and $\beta\in\Delta$ such that $\rho\alpha=\beta$; then, due to the $G$-invariance of $k$, we see that $k(\alpha)=k(\rho\alpha)=k(\beta)$, and therefore the minimum of $k$ taken over $\Delta$ is equivalent to that taken over $R_+$.
\end{proof}

%	\section{Concluding remarks}\label{sc:conclusions}
\section*{Acknowledgments and Funding} 
The authors wish to thank J. Małecki for fruitful and enlightening discussions, which were the foundation for this paper. NH would like to thank M. Jakubzik for helpful comments on the proofs in the uniform case, and SA would like to thank N. Hatano for discussions regarding the alternating polynomial. SA is supported by the JSPS Kakenhi Grant number JP19K14617.

\bibliography{bibtex}
\end{document}